\documentclass[11pt]{amsart}
\usepackage{amsmath,amssymb,amsthm,mathtools}
\usepackage{hyperref}
\usepackage{array}
\usepackage{booktabs}
\usepackage{graphicx}

\newtheorem{theorem}{Theorem}[section]
\newtheorem{lemma}[theorem]{Lemma}
\newtheorem{proposition}[theorem]{Proposition}

\theoremstyle{remark}
\newtheorem{remark}[theorem]{Remark}

\DeclareMathOperator{\Res}{Res}
\DeclareMathOperator{\ord}{ord}
\DeclareMathOperator{\Span}{Span}
\DeclareMathOperator{\cst}{cst}

\newcommand{\Z}{\mathbf{Z}}
\newcommand{\Q}{\mathbf{Q}}
\newcommand{\C}{\mathbf{C}}

\newcommand{\dd}{\mathrm{d}}

\newcommand{\vp}{v_p}

\begin{document}

\title[Sym$^3$ mod $p^4$ supercongruence]{Order drop, Hecke descent, and a mod $p^4$ supercongruence\\
for symmetric-cube hypergeometric coefficients}

\author{Alex Shvets}
\address{Haifa, Israel}
\email{alex@shvets.io}

\begin{abstract}
Let
\[
{}_2F_1\!\left(\frac13,\frac13;1;27z\right)^3=\sum_{n\ge0}A_n z^n.
\]
We prove the universal supercongruence
\[
A(pm)\equiv A(m)\pmod{p^4}
\qquad (p\ge5\ \text{prime},\ m\ge1).
\]

As an independent result of interest, we show that the specialized Mao--Tian cubic recurrence at the
arithmetic specialization $(1/3,1/3,1)$ drops from order $3$ to order $2$.

The proof of the supercongruence proceeds through the modular identification, congruences for the
coefficients of $C$, Lagrange--B\"urmann, and the vanishing of $F_r$ modulo $p^4$ for all $r\ge1$.
First, the generating series
\[
F(t):=\sum_{n\ge0}(-1)^nA_nt^n
\]
admits the modular parametrization
\[
F(t(\tau))=\frac{\eta(\tau)^9}{\eta(3\tau)^3},
\qquad
t(\tau)=\frac{\eta(3\tau)^{12}}{\eta(\tau)^{12}},
\]
whose logarithmic derivative is
\[
C(q)=3E_{5,\chi_0,\chi_3}(q).
\]
Second, the coefficients of $C$ satisfy the exact congruences
\[
c_{mp^r}\equiv c_{mp^{r-1}}\pmod{p^{4r}}
\qquad (p\ge5,\ m,r\ge1),
\]
and Lagrange--B\"urmann gives
\[
B_m:=(-1)^mA_m=\cst_q\!\left(\frac{C(q)}{t(q)^m}\right),
\]
where $\cst_q$ denotes the constant term in the $q$-expansion.

Third, for every prime $p\ge5$ and every $r\ge1$, the exact defect
\[
\widetilde G_r:=T_p\!\left(\frac{C}{t^{rp}}\right)-\frac{C}{t^r}
\]
is a weakly holomorphic modular form of weight $5$ on $\Gamma_0(3)$ with character $\chi_3$ and poles
only at the cusp $\infty$. Using
\[
M_5(\Gamma_0(3),\chi_3)=\Span\{C,tC\},
\qquad
C|W_3=-27i\,tC,
\]
one obtains a cusp-adapted expansion
\[
\widetilde G_r=\beta_{-1}tC+\sum_{j\ge0}\alpha_j\frac{C}{t^j}.
\]
The coefficients with $j\ge r$ are already divisible by $p^4$ from the principal part at
$\infty$, while applying $W_3$ gives
\[
\beta_{-1}\equiv \alpha_j\equiv0\pmod{p^4}
\qquad (0\le j\le r-1).
\]
Hence
\[
F_r(q):=\Lambda_p\!\left(\frac{C(q)}{t(q)^{rp}}\right)-\frac{C(q)}{t(q)^r}\equiv0\pmod{p^4}
\qquad (r\ge1).
\]

Taking constant terms gives
\[
B_{mp}-B_m=\cst_q(F_m)\equiv0\pmod{p^4},
\]
and therefore $A(pm)\equiv A(m)\pmod{p^4}$.
\end{abstract}

\maketitle

\section{Introduction}

Let
\[
F(z):={}_2F_1\!\left(\frac13,\frac13;1;z\right),
\qquad
F(27z)^3=\sum_{n\ge0}A_n z^n.
\]
The sequence begins
\[
1,\ 9,\ 135,\ 2439,\ 48519,\ 1023759,\ 22478121,\ 507897945,\dots.
\]

A theorem of Mao and Tian gives, for general parameters $(a,b,c)$, a third-order linear recurrence for the
Maclaurin coefficients of ${}_2F_1(a,b;c;z)^3$ \cite{MT}. At the arithmetic specialization
$(a,b,c)=(1/3,1/3,1)$, corresponding to a CM modular parametrization, however, the rescaled recurrence for
$A_n$ factors in the Ore algebra and drops to order~$2$. This order drop is an independent result of
interest and is not used in the proof of Theorem~A.

The proof of Theorem~A begins with the modular input. Set
\[
B_n:=(-1)^nA_n,
\qquad
F(t):=\sum_{n\ge0}B_nt^n.
\]
We prove
\[
F(t(\tau))=\frac{\eta(\tau)^9}{\eta(3\tau)^3},
\qquad
t(\tau)=\frac{\eta(3\tau)^{12}}{\eta(\tau)^{12}},
\]
and identify
\[
C(q):=F(t(q))\,\frac{q}{t(q)}\frac{\dd t}{\dd q}
\]
with the Eisenstein series $3E_{5,\chi_0,\chi_3}(q)$. The coefficients of $C$ satisfy the congruences
\[
c_{mp^r}\equiv c_{mp^{r-1}}\pmod{p^{4r}}
\qquad (p\ge5,\ m,r\ge1),
\]
and Lagrange--B\"urmann yields
\[
B_m=\cst_q\!\left(\frac{C(q)}{t(q)^m}\right),
\qquad
H(q):=\frac{q}{t(q)},
\]
where $\cst_q$ denotes the constant term in the $q$-expansion.

The decisive step is a Hecke descent on weakly holomorphic forms. For every $r\ge1$ define
\[
F_r(q):=\Lambda_p\!\left(\frac{C(q)}{t(q)^{rp}}\right)-\frac{C(q)}{t(q)^r},
\qquad
\widetilde G_r:=T_p\!\left(\frac{C}{t^{rp}}\right)-\frac{C}{t^r}.
\]
The Hecke decomposition
\[
T_p=\Lambda_p+\chi_3(p)p^4V_p
\]
shows that $F_r\equiv \widetilde G_r\pmod{p^4}$. The space
\[
M_5(\Gamma_0(3),\chi_3)=\Span\{C,tC\}
\]
is two-dimensional, and the exact defect $\widetilde G_r$ has poles only at the cusp $\infty$. Hence it
admits a cusp-adapted expansion
\[
\widetilde G_r=\beta_{-1}tC+\sum_{j\ge0}\alpha_j\frac{C}{t^j}.
\]
The terms with $j\ge r$ are already $0\bmod p^4$ from the principal part at $\infty$, while after applying
$W_3$ one obtains
\[
\beta_{-1}\equiv \alpha_j\equiv0\pmod{p^4}
\qquad (0\le j\le r-1).
\]
This gives
\[
F_r(q)\equiv0\pmod{p^4}
\qquad (r\ge1),
\]
uniformly in $p$ and without any reduction to finitely many layers.

The main theorem is immediate.

\medskip
\noindent\textbf{Theorem A.}
For every prime $p\ge5$ and every integer $m\ge1$,
\[
A(pm)\equiv A(m)\pmod{p^4}.
\]

Indeed,
\[
B_{mp}-B_m
=
\cst_q\!\left(
\Lambda_p\!\left(\frac{C(q)}{t(q)^{mp}}\right)-\frac{C(q)}{t(q)^m}
\right)
=
\cst_q(F_m)
\equiv0\pmod{p^4}.
\]

The paper is organized as follows. Section~\ref{sec:orderdrop} proves the independent order drop.
Sections~\ref{sec:modular}--\ref{sec:hecke-fricke} contain the proof of Theorem~A: Section~\ref{sec:modular}
proves the modular identification. Section~\ref{sec:eisenstein} develops congruences for the coefficients of
$C$ and the Lagrange--B\"urmann formula. Section~\ref{sec:hecke-fricke} proves the Hecke descent and the
vanishing of $F_r$ modulo $p^4$ for all $r\ge1$, and deduces Theorem~A.

\section{Order drop at the specialization $(1/3,1/3,1)$}\label{sec:orderdrop}

We work in the Ore algebra
$\Q[n]\langle S\rangle$,
$Sf(n)=f(n+1)$,
and $S\,P(n)=P(n+1)S$ for $P\in\Q[n]$.

Set
\[
R(n):=18n^4+108n^3+250n^2+264n+107
\]
and
\[
L_2:=729(n+1)^4-3R(n)S+(n+2)^4S^2\in\Q[n]\langle S\rangle.
\]

\begin{theorem}\label{thm:orderdrop}
Let
\[
{}_2F_1\!\left(\frac13,\frac13;1;27z\right)^3=\sum_{n\ge0}A_n z^n.
\]
Then $A_n$ satisfies the second-order recurrence
\begin{equation}\label{eq:S1-rec}
(n+2)^4A_{n+2}
-3\bigl(18n^4+108n^3+250n^2+264n+107\bigr)A_{n+1}
+729(n+1)^4A_n=0
\end{equation}
for all $n\ge 0$, with initial values
$A_0=1$, $A_1=9$.

Moreover, the specialization at $(a,b,c)=(\frac13,\frac13,1)$ of the rescaled
Mao--Tian order-$3$ operator is
\begin{equation}\label{eq:L3explicit}
\begin{aligned}
L_3:={}&-19683(n+1)^4
+81\bigl(27n^4+180n^3+466n^2+552n+251\bigr)S \\
&\quad-3\bigl(27n^4+252n^3+898n^2+1448n+891\bigr)S^2
+(n+3)^4S^3,
\end{aligned}
\end{equation}
and it factors in the Ore algebra as
\begin{equation}\label{eq:ore-factor}
L_3=(S-27)L_2.
\end{equation}
In particular, the generic order-$3$ recurrence drops to order $2$ at this arithmetic specialization.
\end{theorem}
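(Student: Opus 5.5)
The theorem has three parts: the second-order recurrence \eqref{eq:S1-rec}, the explicit form \eqref{eq:L3explicit} of the specialized Mao--Tian operator, and the factorization \eqref{eq:ore-factor}. I would prove them in the reverse order, since the factorization is a finite polynomial identity and the recurrence for $A_n$ then follows from it.

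\emph{The operator $L_3$.} I take the Mao--Tian third-order recurrence for the coefficients of ${}_2F_1(a,b;c;z)^3$ \cite{MT} and substitute $(a,b,c)=(\frac13,\frac13,1)$. Then I rescale $z\mapsto 27z$, so the coefficient of $z^n$ is multiplied by $27^n$. In operator terms this conjugates $S$ to $27^{-1}S$. After clearing a common constant, this should give \eqref{eq:L3explicit}. This step is bookkeeping; the normalization is fixed by making the leading coefficient equal to $(n+3)^4$.

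\emph{The factorization.} I expand $(S-27)L_2$ using the rule $S\,P(n)=P(n+1)S$ and compare with $L_3$ coefficient by coefficient:
\begin{itemize}
\item the $S^0$-coefficient is $-27\cdot729(n+1)^4=-19683(n+1)^4$;
\item the $S^3$-coefficient is $(n+3)^4$;
\item the $S^1$-coefficient is $729(n+2)^4+81R(n)$;
\item the $S^2$-coefficient is $-3R(n+1)-27(n+2)^4$.
\end{itemize}
The middle two must equal $81(27n^4+180n^3+466n^2+552n+251)$ and $-3(27n^4+252n^3+898n^2+1448n+891)$ respectively. Both checks are finite polynomial identities in $n$. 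For example, the constant term of the $S^1$-coefficient is $729\cdot16+81\cdot107=81\cdot251$, which matches. I would expand the remaining coefficients directly.

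\emph{The recurrence for $A_n$.} Set $u_n:=(L_2A)(n)$, the left side of \eqref{eq:S1-rec}. By the factorization and the Mao--Tian theorem, $(S-27)u=L_3A=0$, so $u_n=27^n u_0$.
\begin{itemize}
\item \emph{Obstacle.} I still need $A_2=135$, and this is the main obstacle. Two initial values do not determine the third-order recurrence, so $u_0=0$ does not follow from $A_0,A_1$ alone.
\item \emph{Computing $A_2$.} The coefficients of ${}_2F_1(\frac13,\frac13;1;27z)$ are $27^n\bigl((1/3)_n/n!\bigr)^2$, which gives $1,3,36,\dots$. Cubing the series yields $A_0=1$, $A_1=9$, $A_2=3\cdot36+3\cdot9=135$.
\item \emph{Checking $u_0=0$.} Then $u_0=16\cdot135-3\cdot107\cdot9+729=2160-2889+729=0$.
\end{itemize}
Hence $u_n=0$ for all $n$, which proves \eqref{eq:S1-rec}. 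In particular the generic order-$3$ recurrence drops to order $2$ at this specialization.

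The one step I cannot fully secure in advance is the exact normalization of the Mao--Tian operator. If it does not reproduce \eqref{eq:L3explicit}, I would fall back on a direct argument. The function $y=F(27z)^3$ satisfies the symmetric-cube equation of the hypergeometric equation, a fourth-order ODE. Translating that ODE into a recurrence on coefficients should give $L_2$ directly. This would prove \eqref{eq:S1-rec} independently of \cite{MT}, and the factorization would remain a pure identity check.
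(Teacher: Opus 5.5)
Your proposal is correct and follows the paper's proof essentially step for step. Like the paper, you specialize the Mao--Tian recurrence and rescale by $27^n$ to get $L_3$, verify $(S-27)L_2=L_3$ coefficientwise using $S\,P(n)=P(n+1)S$, and then kill $w=L_2A$ via $w_{n+1}=27w_n$ and the check $w_0=729-2889+2160=0$, which, as you rightly note, needs $A_2=135$ (your ODE fallback is not needed).
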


\begin{proof}
Let
\[
{}_2F_1\!\left(\frac13,\frac13;1;z\right)^3=\sum_{n\ge0}v_n z^n,
\]
so that $A_n=27^n v_n$.
By \cite[Theorem~3.1]{MT}, the sequence $v_n$ satisfies a third-order recurrence
\[
v_{n+1}=\beta_0(n)v_n+\beta_1(n)v_{n-1}+\beta_2(n)v_{n-2}
\qquad (n\ge 1).
\]
Specializing the explicit coefficients of \cite[Theorem~3.1]{MT} at
$(a,b,c)=(\frac13,\frac13,1)$ gives
\[
\beta_0(n)=\frac{27n^4+36n^3+34n^2+16n+3}{9(n+1)^4},
\]
\[
\beta_1(n)=-\frac{27n^4-36n^3+34n^2-16n+3}{9(n+1)^4},
\qquad
\beta_2(n)=\frac{(n-1)^4}{(n+1)^4}.
\]
Substituting $v_n=27^{-n}A_n$, shifting $n\mapsto n+2$, and clearing denominators yields
\[
(n+3)^4A_{n+3}
-3\bigl(27n^4+252n^3+898n^2+1448n+891\bigr)A_{n+2}
\]
\[
\qquad\qquad
+81\bigl(27n^4+180n^3+466n^2+552n+251\bigr)A_{n+1}
-19683(n+1)^4A_n=0,
\]
which is exactly the recurrence $L_3A=0$ with $L_3$ as in \eqref{eq:L3explicit}.

To factor $L_3$, compute in the Ore algebra:
\[
\begin{aligned}
(S-27)L_2
={}&S\bigl(729(n+1)^4\bigr)-3S\bigl(R(n)\bigr)S+S\bigl((n+2)^4\bigr)S^2 \\
&\quad-27\cdot 729(n+1)^4+81R(n)S-27(n+2)^4S^2 \\
={}&-19683(n+1)^4+\bigl(729(n+2)^4+81R(n)\bigr)S \\
&\quad-\bigl(3R(n+1)+27(n+2)^4\bigr)S^2+(n+3)^4S^3.
\end{aligned}
\]
Now
\[
729(n+2)^4+81R(n)
=81\bigl(27n^4+180n^3+466n^2+552n+251\bigr)
\]
and
\[
3R(n+1)+27(n+2)^4
=3\bigl(27n^4+252n^3+898n^2+1448n+891\bigr),
\]
so indeed $(S-27)L_2=L_3$.

Now set $w:=L_2A$.
Since $L_3A=0$, the factorization \eqref{eq:ore-factor} yields
$(S-27)w=0$,
that is,
\[
w_{n+1}=27w_n \qquad (n\ge 0).
\]
Therefore it is enough to check that $w_0=0$.
From the definition of $A_n$ one finds
$A_0=1$, $A_1=9$, $A_2=135$.
Hence
\[
w_0
=729A_0-3R(0)A_1+2^4A_2
=729-2889+2160=0.
\]
It follows that $w_n=0$ for all $n\ge 0$,
so $L_2A=0$, which is exactly \eqref{eq:S1-rec}.
\end{proof}

\begin{remark}
Once the factorization \eqref{eq:ore-factor} is known, the passage from order $3$ to order $2$ is immediate:
the residual sequence $w=L_2A$ satisfies
$w_{n+1}=27w_n$, and one initial check kills it.
\end{remark}

\begin{remark}
Although the derivation above invokes \cite[Theorem~3.1]{MT} to obtain the general form of the
cubic recurrence, the specific specialized coefficients used here are independently verifiable
by direct substitution: the resulting recurrence \eqref{eq:S1-rec} annihilates the explicit
sequence $A_0=1,\,A_1=9,\,A_2=135,\,A_3=2439,\,\ldots$ derived from the hypergeometric expansion,
as is implicit in the initial-value check $w_0=0$ in the proof above.
\end{remark}

\section{Modular identification}\label{sec:modular}

Let
\[
q=e^{2\pi i\tau},
\qquad
t(\tau):=\frac{\eta(3\tau)^{12}}{\eta(\tau)^{12}},
\qquad
B_n:=(-1)^nA_n.
\]
Then
\[
\sum_{n\ge 0} B_n t^n
={}_2F_1\!\left(\frac13,\frac13;1;-27t\right)^3.
\]
Define
\[
C(q):=\Bigl(\sum_{n\ge 0} B_n t^n\Bigr)\cdot\frac{q}{t}\cdot\frac{\dd t}{\dd q}.
\]

\begin{theorem}\label{thm:modular}
With the notation above,
\begin{equation}\label{eq:mod-id}
\sum_{n\ge 0} B_n\, t(\tau)^n
=
\frac{\eta(\tau)^9}{\eta(3\tau)^3}.
\end{equation}
Consequently,
\begin{equation}\label{eq:C-eisenstein}
C(q)=3E_{5,\chi_0,\chi_3}(q),
\end{equation}
where $\chi_3(\cdot)=\left(\frac{\cdot}{3}\right)$.
If $C(q)=1+\sum_{n\ge 1} c_n q^n$, then
\begin{equation}\label{eq:c-divisor-sum}
c_n = 3\,\sigma_{4,\chi_3}(n),
\qquad
\sigma_{4,\chi_3}(n):=\sum_{d\mid n}\chi_3(d)\,d^4.
\end{equation}
\end{theorem}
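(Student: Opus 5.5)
The plan is to prove \eqref{eq:mod-id} through the classical Borweins' cubic theta parametrization, and then to obtain \eqref{eq:C-eisenstein} and \eqref{eq:c-divisor-sum} by differentiating and comparing with a known basis of $M_5(\Gamma_0(3),\chi_3)$.

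\emph{Step 1: the cubic parametrization.} Let $a(q)=\sum_{m,n}q^{m^2+mn+n^2}$ and $c(q)=3\eta(3\tau)^3/\eta(\tau)$, and $b(q)=\eta(\tau)^3/\eta(3\tau)$. These satisfy $a^3=b^3+c^3$. Ramanujan's cubic theory gives
\[
{}_2F_1\!\left(\tfrac13,\tfrac23;1;x\right)=a(q),\qquad x=c^3/a^3 .
\]
We need the parameters $(\tfrac13,\tfrac13)$ rather than $(\tfrac13,\tfrac23)$. Pfaff's transformation handles this:
\[
{}_2F_1\!\left(\tfrac13,\tfrac13;1;z\right)=(1-z)^{-1/3}\,{}_2F_1\!\left(\tfrac13,\tfrac23;1;\tfrac{z}{z-1}\right).
\]
Set $z=-27t$ with $t=\eta(3\tau)^{12}/\eta(\tau)^{12}$. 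Then $c^3/b^3=27t$, so
\[
\frac{z}{z-1}=\frac{27t}{1+27t}=\frac{c^3}{b^3+c^3}=\frac{c^3}{a^3}=x .
\]
Also $1-z=1+27t=a^3/b^3$. Hence
\[
{}_2F_1\!\left(\tfrac13,\tfrac13;1;-27t\right)=\frac{b}{a}\,a=b,
\]
and cubing gives $b^3=\eta(\tau)^9/\eta(3\tau)^3$, which is \eqref{eq:mod-id}. The identification is valid near $q=0$ as power series, since $t=q+O(q^2)$. The one input to cite carefully is the $(\tfrac13,\tfrac23)$ inversion formula; if it is not taken as known, I would prove it by checking that $a$ and $x$ satisfy the hypergeometric differential equation in $x$, using the derivative formulas for $a$, $b$, $c$ below.

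\emph{Step 2: the logarithmic derivative of $t$.} Write $D=q\,\dd/\dd q$. From the $\eta$-logarithmic derivatives,
\[
\frac{Dt}{t}=12\cdot\frac{3E_2(3\tau)-E_2(\tau)}{24}=\frac{3E_2(3\tau)-E_2(\tau)}{2}.
\]
This is the weight-$2$ form on $\Gamma_0(3)$ that equals $a(q)^2$; it is a standard identity, checkable from the dimension of $M_2(\Gamma_0(3))$, which is one. Therefore
\[
C=b^3a^2=\frac{\eta(\tau)^9}{\eta(3\tau)^3}\,a(q)^2 .
\]
This is a holomorphic modular form of weight $3/2\cdot 2+\dots$; more precisely, $b^3$ has weight $3$ with character $\chi_3$ and $a^2$ has weight $2$, so $C$ has weight $5$ and character $\chi_3$ on $\Gamma_0(3)$.

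\emph{Step 3: identification with the Eisenstein series.} The space $M_5(\Gamma_0(3),\chi_3)$ has dimension $2$. It is spanned by the two Eisenstein series
\[
E_{5,\chi_0,\chi_3}=\tfrac13+\sum_{n\ge1}\sigma_{4,\chi_3}(n)q^n
\]
(up to normalization) and $E_{5,\chi_3,\chi_0}=\sum_{n\ge1}\bigl(\sum_{d\mid n}\chi_3(n/d)d^4\bigr)q^n$; there are no cusp forms in this space. I would compare the first two $q$-coefficients, expanding
\[
C=(1-9q+\cdots)(1+12q+\cdots)=1+3q+\cdots,
\]
against $3\sigma_{4,\chi_3}(1)=3$. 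Agreement of two coefficients in a two-dimensional space, whose basis is triangular in $q^0,q^1$, forces equality. Concretely, if $C=\lambda E_{5,\chi_0,\chi_3}+\mu E_{5,\chi_3,\chi_0}$, the constant term gives $\lambda=3$ and the $q$-coefficient gives $\mu=0$. This proves \eqref{eq:C-eisenstein}, and \eqref{eq:c-divisor-sum} follows by reading off coefficients.

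The main obstacle is Step 1, specifically the correct normalization of the cubic inversion formula and the sign bookkeeping in $z=-27t$. Steps 2 and 3 are finite-dimensional checks, where one only needs to get the normalization of the Eisenstein series right.
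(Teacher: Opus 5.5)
Your proof is correct. Step~1 is the paper's argument: Borwein--Borwein--Garvan inversion plus Pfaff with $z=-27t$, $z/(z-1)=c^3/a^3$, and $(1-z)^{-1/3}=b/a$ on the branch normalized at $q=0$.

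You diverge from the paper at the Eisenstein identification. The paper does not prove \eqref{eq:C-eisenstein}; it imports it from Moy's Example~5.2. You derive it instead:
\begin{itemize}
\item You compute $Dt/t=\tfrac12\bigl(3E_2(3\tau)-E_2(\tau)\bigr)=a(q)^2$, where the second equality follows from $\dim M_2(\Gamma_0(3))=1$ and equal constant terms.
\item This gives the factorization $C=a(q)^2\,\eta(\tau)^9/\eta(3\tau)^3$. It places $C$ in $M_5(\Gamma_0(3),\chi_3)$ directly, using that $a$ is the weight-$1$ theta series with character $\chi_3$ and that $\eta(\tau)^9/\eta(3\tau)^3$ meets the standard eta-quotient criteria (level $3$, weight $3$, character $\chi_3$, holomorphic at both cusps).
\item Two-dimensionality of that space and the match $C=1+3q+\cdots$ then force $C=3E_{5,\chi_0,\chi_3}$.
\end{itemize}

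Your route is self-contained. It uses only the dimension count and Eisenstein basis the paper needs anyway in Proposition~\ref{prop:weight5-basis}, and $C=a^2b^3$ makes the later modular manipulations of $C$ transparent. The paper's route is shorter but rests entirely on the citation.

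On the normalization issue you flagged: you can avoid needing the constant term $\tfrac13=L(-4,\chi_3)/2$ of $E_{5,\chi_0,\chi_3}$. From $b^3=1-9q+27q^2+\cdots$ and $a^2=1+12q+36q^2+\cdots$ you get $C=1+3q-45q^2+\cdots$. The $q^2$ coefficients of the two basis series are $\sigma_{4,\chi_3}(2)=-15$ and $\sum_{d\mid 2}\chi_3(2/d)d^4=15$. So the $q^1$ and $q^2$ equations give $\lambda+\mu=3$ and $-15\lambda+15\mu=-45$, hence $\lambda=3$ and $\mu=0$. The constant term $\tfrac13$ then follows as a consequence rather than an input.
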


\begin{proof}
Let
\[
a(q):=\sum_{m,n\in\Z}q^{m^2+mn+n^2},
\qquad
b(q):=\frac{\eta(\tau)^3}{\eta(3\tau)},
\qquad
c(q):=3\frac{\eta(3\tau)^3}{\eta(\tau)}.
\]
The cubic theory of Borwein--Borwein--Garvan gives
\[
a(q)^3=b(q)^3+c(q)^3
\qquad\text{and}\qquad
{}_2F_1\!\left(\frac13,\frac23;1;\frac{c(q)^3}{a(q)^3}\right)=a(q)
\]
\cite[Theorem~2.3 and Corollary~2.4]{BBG}.
Since
\[
-27t(\tau)=-\frac{c(q)^3}{b(q)^3},
\]
Pfaff's transformation
\[
{}_2F_1(a,b;c;z)=(1-z)^{-a}{}_2F_1\!\left(a,c-b;c;\frac{z}{z-1}\right)
\]
with $a=b=\frac13$, $c=1$, and $z=-27t(\tau)$ yields
\[
{}_2F_1\!\left(\frac13,\frac13;1;-27t(\tau)\right)
=(1-z)^{-1/3}{}_2F_1\!\left(\frac13,\frac23;1;\frac{z}{z-1}\right).
\]
Now
\[
\frac{z}{z-1}
=
\frac{c(q)^3}{b(q)^3+c(q)^3}
=
\frac{c(q)^3}{a(q)^3},
\]
and
\[
(1-z)^{-1/3}
=
\left(1+\frac{c(q)^3}{b(q)^3}\right)^{-1/3}
=
\left(\frac{a(q)^3}{b(q)^3}\right)^{-1/3}
=
\frac{b(q)}{a(q)}.
\]
Here $(1-z)^{-1/3}$ denotes the analytic branch near $z=0$ normalized by
$(1-z)^{-1/3}\big|_{z=0}=1$; both sides of the equality above are analytic at $q=0$ with constant term
$1$, so they coincide identically as power series in $q$.
Therefore
\[
{}_2F_1\!\left(\frac13,\frac13;1;-27t(\tau)\right)
=
\frac{b(q)}{a(q)}
{}_2F_1\!\left(\frac13,\frac23;1;\frac{c(q)^3}{a(q)^3}\right)
=
\frac{b(q)}{a(q)}a(q)
=
\frac{\eta(\tau)^3}{\eta(3\tau)}.
\]
Cubing gives \eqref{eq:mod-id}.

For the differential statement, \cite[Example~5.2]{Moy} gives
\[
\frac{\eta(\tau)^9}{\eta(3\tau)^3}\cdot\frac{q}{t}\cdot\frac{\dd t}{\dd q}
=3E_{5,\chi_0,\chi_3}(\tau),
\]
which is exactly \eqref{eq:C-eisenstein}. We use the Fourier expansion
\[
E_{5,\chi_0,\chi_3}(q)
=
\frac13+\sum_{n\ge1}\left(\sum_{d\mid n}\chi_3(d)d^4\right)q^n.
\]
Hence
\[
C(q)=1+\sum_{n\ge1}3\left(\sum_{d\mid n}\chi_3(d)d^4\right)q^n,
\]
so \eqref{eq:c-divisor-sum} follows.
\end{proof}

\begin{remark}\label{rem:moy-precedence}
The eta-product $\eta(\tau)^9/\eta(3\tau)^3$, the Hauptmodul $t$, and the Eisenstein identification
$C(q)=3E_{5,\chi_0,\chi_3}$ already appear in \cite[Example~5.2]{Moy}. The derivation of the
generating-series identity~\eqref{eq:mod-id} via Pfaff's transformation and the Borwein cubic theory
provides a convenient route from the hypergeometric series to the modular parametrization.
\end{remark}

\section{Congruences for the coefficients of $C$ and Lagrange--B\"urmann}\label{sec:eisenstein}

Define
\[
C(q)=1+\sum_{n\ge1} c_n q^n,
\qquad
c_n=3\sigma_{4,\chi_3}(n),
\qquad
\sigma_{4,\chi_3}(n):=\sum_{d\mid n}\chi_3(d)d^4.
\]
Also define
\[
H(q):=\frac{q}{t(q)}=\prod_{\substack{n\ge1\\3\nmid n}}(1-q^n)^{12}\in 1+q\Z[[q]].
\]

\subsection*{4.1. Euler factors}

\begin{lemma}\label{lem:euler-factor}
The arithmetic function $\sigma_{4,\chi_3}$ is multiplicative. For every prime $p\neq 3$ and every integer $N\ge0$ one has
\[
\sigma_{4,\chi_3}(p^N)=1+\chi_3(p)p^4+\chi_3(p)^2p^8+\cdots+\chi_3(p)^N p^{4N}.
\]
Consequently, if $m=p^a m_0$ with $a\ge0$ and $p\nmid m_0$, then for every $r\ge1$,
\[
\sigma_{4,\chi_3}(mp^r)-\sigma_{4,\chi_3}(mp^{r-1})
=
\chi_3(p)^{a+r}p^{4(a+r)}\sigma_{4,\chi_3}(m_0).
\]
\end{lemma}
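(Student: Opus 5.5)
The plan is to work directly from the definition $\sigma_{4,\chi_3}(n)=\sum_{d\mid n}\chi_3(d)d^4$ and handle the three claims in turn.

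\emph{Multiplicativity.} The function $d\mapsto\chi_3(d)d^4$ is completely multiplicative, because $\chi_3$ is a Dirichlet character and $d\mapsto d^4$ is completely multiplicative. Hence $\sigma_{4,\chi_3}=(\chi_3\cdot \mathrm{id}^4)*\mathbf 1$ is a Dirichlet convolution of two multiplicative functions, and is therefore multiplicative. Concretely, for coprime $m,n$ the divisors of $mn$ correspond bijectively to pairs $(d_1,d_2)$ with $d_1\mid m$ and $d_2\mid n$. Moreover $\chi_3(d_1d_2)(d_1d_2)^4$ factors, so the sum splits as a product.

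\emph{Euler factor.} For a prime power $p^N$ the divisors are $p^k$ with $0\le k\le N$. Complete multiplicativity gives $\chi_3(p^k)p^{4k}=\chi_3(p)^kp^{4k}$, and summing over $k$ yields the stated geometric sum. The hypothesis $p\neq3$ is not really needed for this formula, since for $p=3$ it still holds with $\chi_3(3)=0$. It matters only later, so that $\chi_3(p)=\pm1$.

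\emph{Difference formula.} Write $m=p^am_0$ with $p\nmid m_0$. Then $mp^r=p^{a+r}m_0$ and $mp^{r-1}=p^{a+r-1}m_0$, and in both cases the $p$-part is coprime to $m_0$. Multiplicativity gives
\[
\sigma_{4,\chi_3}(mp^r)-\sigma_{4,\chi_3}(mp^{r-1})=\bigl(\sigma_{4,\chi_3}(p^{a+r})-\sigma_{4,\chi_3}(p^{a+r-1})\bigr)\sigma_{4,\chi_3}(m_0).
\]
By the Euler-factor formula, consecutive partial geometric sums differ by their top term $\chi_3(p)^{a+r}p^{4(a+r)}$. Since $r\ge1$, we have $a+r-1\ge0$, so both exponents are valid. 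This gives the claim.

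No step is a genuine obstacle. The only point needing care is that the $p$-adic splitting $m=p^am_0$ is applied to $mp^r$ and $mp^{r-1}$ rather than to $m$ alone, so that the coprimality needed for multiplicativity is preserved.
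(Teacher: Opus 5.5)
Your proposal is correct and follows the paper's proof essentially step for step. Multiplicativity comes from the complete multiplicativity of $d\mapsto\chi_3(d)d^4$, the Euler factor is the geometric sum over the divisors $p^k$, and the difference formula comes from factoring off $\sigma_{4,\chi_3}(m_0)$ and observing that consecutive partial geometric sums differ by their top term $\chi_3(p)^{a+r}p^{4(a+r)}$.
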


\begin{proof}
Since $n\mapsto \chi_3(n)n^4$ is completely multiplicative on integers prime to $3$, its divisor sum $\sigma_{4,\chi_3}$ is multiplicative. For $p\neq3$,
\[
\sigma_{4,\chi_3}(p^N)=\sum_{j=0}^N \chi_3(p^j)p^{4j}=
\sum_{j=0}^N \chi_3(p)^j p^{4j},
\]
which is the displayed Euler factor.

Now write $m=p^am_0$ with $p\nmid m_0$. Multiplicativity gives
\begin{align*}
\sigma_{4,\chi_3}(mp^r)
&=\sigma_{4,\chi_3}(p^{a+r})\sigma_{4,\chi_3}(m_0),\\
\sigma_{4,\chi_3}(mp^{r-1})
&=\sigma_{4,\chi_3}(p^{a+r-1})\sigma_{4,\chi_3}(m_0).
\end{align*}
Subtracting and using the Euler factor formula yields the claim.
\end{proof}

\subsection*{4.2. Congruences for the coefficients of $C$}

\begin{theorem}[Congruences for the coefficients of $C$]\label{thm:tower}
For every prime $p\ge5$ and all integers $m,r\ge1$,
\[
c_{mp^r}\equiv c_{mp^{r-1}}\pmod{p^{4r}}.
\]
In particular,
\[
\Lambda_p(C)(q)=1+\sum_{n\ge1}c_{np}q^n\equiv C(q)\pmod{p^4}.
\]
\end{theorem}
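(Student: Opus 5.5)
The plan is to apply Lemma~\ref{lem:euler-factor} directly. Recall $c_n=3\sigma_{4,\chi_3}(n)$. Fix $p\ge5$, so $p\neq3$ and $\chi_3(p)=\pm1$. Write $m=p^am_0$ with $a\ge0$ and $p\nmid m_0$. By Lemma~\ref{lem:euler-factor},
\[
c_{mp^r}-c_{mp^{r-1}}
=3\bigl(\sigma_{4,\chi_3}(mp^r)-\sigma_{4,\chi_3}(mp^{r-1})\bigr)
=3\,\chi_3(p)^{a+r}p^{4(a+r)}\sigma_{4,\chi_3}(m_0).
\]
Here $\sigma_{4,\chi_3}(m_0)$ is an integer, $\chi_3(p)^{a+r}=\pm1$, and $4(a+r)\ge 4r$. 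So the difference is divisible by $p^{4(a+r)}$, and in particular by $p^{4r}$. This proves the first claim. Divisibility by the even larger power $p^{4(a+r)}$ comes for free.

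For the ``in particular'' statement, take $r=1$. For every $n\ge1$ this gives $c_{np}\equiv c_n\pmod{p^4}$. The constant terms of $\Lambda_p(C)$ and $C$ are both $1$, since $\Lambda_p$ keeps the coefficients of $q^{np}$, reindexed as $q^n$. Hence
\[
\Lambda_p(C)(q)=1+\sum_{n\ge1}c_{np}q^n\equiv 1+\sum_{n\ge1}c_nq^n=C(q)\pmod{p^4}
\]
coefficientwise in $\Z[[q]]$.

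There is no real obstacle here. The only points to check are that Lemma~\ref{lem:euler-factor} is stated for $m$ with an arbitrary power of $p$, which it is via the decomposition $m=p^am_0$, and that the coefficients $c_n$ are integers, which is clear from the divisor-sum formula \eqref{eq:c-divisor-sum}. The hypothesis $p\ge5$ is used only to ensure $p\ne3$, so that $\chi_3(p)$ is a unit and the Euler-factor formula applies. The factor $3$ in $c_n$ is harmless, since only the power of $p$ matters.
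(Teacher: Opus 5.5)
Your proof is correct and is the paper's argument: write $m=p^am_0$, apply Lemma~\ref{lem:euler-factor} to get $c_{mp^r}-c_{mp^{r-1}}=3\chi_3(p)^{a+r}p^{4(a+r)}\sigma_{4,\chi_3}(m_0)$, and conclude from $4(a+r)\ge 4r$. The $\Lambda_p(C)$ statement is then the case $r=1$ applied coefficientwise, exactly as you did.
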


\begin{proof}
Since $c_n=3\sigma_{4,\chi_3}(n)$ and $p\neq3$, Lemma~\ref{lem:euler-factor} gives
\[
c_{mp^r}-c_{mp^{r-1}}=3\chi_3(p)^{a+r}p^{4(a+r)}\sigma_{4,\chi_3}(m_0)
\]
when $m=p^am_0$ with $p\nmid m_0$. Therefore
\[
\vp\bigl(c_{mp^r}-c_{mp^{r-1}}\bigr)\ge 4(a+r)\ge 4r,
\]
which is exactly the congruence.

The statement about $\Lambda_p(C)$ is the case $r=1$ applied coefficientwise.
\end{proof}

\subsection*{4.3. Lagrange--B\"urmann}

\begin{theorem}[Lagrange--B\"urmann coefficient formula]\label{thm:LB}
For every $m\ge0$,
\[
B_m=\cst_q\!\left(\frac{C(q)}{t(q)^m}\right).
\]
\end{theorem}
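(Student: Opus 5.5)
The plan is to apply the Lagrange--B\"urmann formula to the power series $t=t(q)=q/H(q)\in q+q^2\Z[[q]]$, which is a valid local coordinate at $q=0$. Write $\mathcal F(t):=\sum_{n\ge0}B_nt^n$, so that by Theorem~\ref{thm:modular} we have $\mathcal F(t(q))=\eta(\tau)^9/\eta(3\tau)^3$ as formal $q$-series. By the definition of $C$,
\[
\frac{C(q)}{t(q)^m}=\mathcal F(t(q))\,\frac{1}{t(q)^{m+1}}\,q\frac{\dd t}{\dd q}.
\]
Taking the constant term in $q$ of this expression is the same as taking the formal residue $\Res_q$ of $\mathcal F(t(q))\,t(q)^{-m-1}\,t'(q)\,\dd q$.

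The key step is the change of variables for formal residues. If $t(q)=q+O(q^2)$ and $G$ is a Laurent series, then
\[
\Res_q\bigl(G(t(q))\,t'(q)\bigr)=\Res_t\bigl(G(t)\bigr).
\]
To verify this, check it on monomials $G=t^k$. For $k\ne-1$, the expression $t^k t'$ is the derivative $\frac{\dd}{\dd q}\bigl(t^{k+1}/(k+1)\bigr)$ of a Laurent series, so its residue vanishes. For $k=-1$, write $t'/t=1/q+\frac{\dd}{\dd q}\log(t/q)$; here $\log(t/q)$ is a genuine power series because $t/q\in1+q\Z[[q]]$, so the residue is $1$. Extend from monomials to general $G$ by linearity. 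Truncation is harmless: for fixed $m$ only finitely many terms of $G(t)=\mathcal F(t)t^{-m-1}$ carry negative powers of $t$, and the remaining tail lies in $t^N\Z[[t]]$, which contributes only high powers of $q$.

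Applying this with $G(t)=\mathcal F(t)t^{-m-1}$ gives
\[
\cst_q\!\left(\frac{C}{t^m}\right)=\Res_t\bigl(\mathcal F(t)t^{-m-1}\bigr)=[t^m]\mathcal F(t)=B_m.
\]

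The only delicate point is bookkeeping: confirming that $C$ is defined with exactly the factor $q\,\dd t/\dd q / t$, so that dividing by $t^m$ yields the factor $t^{-m-1}t'$ with no stray power of $q$. This holds because $\cst_q(f)=\Res_q(f/q)$. No modularity is needed here beyond the formal identity $C=\mathcal F(t)\,q t'/t$, which is the definition of $C$ itself. The Eisenstein identification is not used. The case $m=0$ is consistent: $\cst_q(C)=1=B_0$.
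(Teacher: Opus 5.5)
Your proposal is correct and is essentially the paper's proof: both rewrite $\cst_q(C/t^m)$ as $\Res_q$ of $\mathcal F(t(q))\,t(q)^{-m-1}t'(q)\,\dd q$, apply the formal residue change of variables for $t=q+O(q^2)$, and read off $[t^m]\mathcal F=B_m$. The only difference is that you prove the change-of-variables formula on monomials (exact derivatives for $k\ne-1$, the $\log(t/q)$ correction for $k=-1$), whereas the paper simply cites it.
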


\begin{proof}
Since
\[
F(t)=\sum_{n\ge0}B_nt^n,
\]
we have
\[
B_m=\cst_t\!\left(\frac{F(t)}{t^m}\right)
=\Res_{t=0}\left(\frac{F(t)}{t^m}\frac{\dd t}{t}\right).
\]
From the $\eta$-product definition,
\[
t(q)=\frac{\eta(3\tau)^{12}}{\eta(\tau)^{12}}=q+12q^2+90q^3+\cdots\in q\Z[[q]],
\qquad [q]t(q)=1,
\]
so $q\mapsto t(q)$ is a formal change of coordinates at the origin, with inverse
$q(t)\in t\Z[[t]]$. The substitution below is therefore legitimate as a formal
change of variables in Laurent series: by the residue change-of-variables formula,
\[
\Res_{t=0}\left(\frac{F(t)}{t^m}\frac{\dd t}{t}\right)
=
\Res_{q=0}\left(
\frac{F(t(q))}{t(q)^m}\frac{q\,t'(q)}{t(q)}\frac{\dd q}{q}
\right).
\]
Now
\[
F(t(q))\,\frac{q}{t(q)}\,\frac{\dd t}{\dd q}=C(q),
\]
so
\[
B_m
=
\Res_{q=0}\left(\frac{C(q)}{t(q)^m}\frac{\dd q}{q}\right)
=
\cst_q\!\left(\frac{C(q)}{t(q)^m}\right).
\]
\end{proof}

\section{Hecke descent and the vanishing of $F_r$ modulo $p^4$ for all $r\ge1$}\label{sec:hecke-fricke}

Throughout this section $p\ge5$ is prime. For
\[
f(q)=\sum_{n\gg-\infty} a_n q^n
\]
define
\[
\Lambda_p(f):=\sum_{n\gg-\infty} a_{np}q^n,
\qquad
V_p(f):=f(q^p)=\sum_{n\gg-\infty} a_n q^{pn}.
\]

\subsection*{5.1. Hecke defects}

\begin{lemma}\label{lem:hecke-decomp}
Let $f(q)=\sum_{n\gg-\infty} a_n q^n\in M_k^!(\Gamma_0(3),\chi_3)$ and let $p\ge5$ be prime. Then
\[
T_p f = \Lambda_p(f)+\chi_3(p)p^{k-1}V_p(f).
\]
In particular, for weight $k=5$,
\[
T_p=\Lambda_p+\chi_3(p)p^4V_p.
\]
\end{lemma}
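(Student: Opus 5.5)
I will prove the lemma straight from the definition of the Hecke operator $T_p$ on $\Gamma_0(3)$ with nebentypus $\chi_3$, for $p\nmid 3$. I take that definition as the slash-operator sum over the standard coset representatives:
\[
T_p f \;=\; p^{k/2-1}\Bigl(\sum_{b=0}^{p-1} f\big|_k\begin{pmatrix}1&b\\0&p\end{pmatrix} \;+\; \chi_3(p)\, f\big|_k\,\sigma_p\begin{pmatrix}p&0\\0&1\end{pmatrix}\Bigr),
\]
where $\sigma_p\in\Gamma_0(3)$ satisfies $\sigma_p\equiv\begin{pmatrix}p^{-1}&0\\0&p\end{pmatrix}\pmod 3$. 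This is the same double coset $\Gamma_0(3)\begin{pmatrix}1&0\\0&p\end{pmatrix}\Gamma_0(3)$ as in the holomorphic case. It acts on $M_k^!(\Gamma_0(3),\chi_3)$ because the coset decomposition and the invariance argument are purely group-theoretic and never use holomorphy at the cusps. The plan is to compute each piece of this sum on a Laurent $q$-expansion.

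First, the upper-triangular representatives. For $\begin{pmatrix}1&b\\0&p\end{pmatrix}$ one has $f|_k\begin{pmatrix}1&b\\0&p\end{pmatrix}(\tau)=p^{k/2}p^{-k}f((\tau+b)/p)$. After the normalizing factor $p^{k/2-1}$, the sum becomes
\[
\frac1p\sum_{b=0}^{p-1}\sum_n a_n\, e^{2\pi i n(\tau+b)/p}.
\]
The character sum $\frac1p\sum_b e^{2\pi i nb/p}$ equals $1$ if $p\mid n$ and $0$ otherwise. This identity holds term by term and is finite for each $n$. The series has only finitely many negative-index terms and converges absolutely on $\mathrm{Im}\,\tau>0$, so the interchange of sums is harmless. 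The result is exactly $\sum_n a_{np}q^n=\Lambda_p(f)$.

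Second, the remaining representative. Since $\sigma_p\in\Gamma_0(3)$ with lower-right entry $\equiv p \pmod 3$, modularity gives $f|_k\sigma_p=\chi_3(p)f$. Then $f|_k\begin{pmatrix}p&0\\0&1\end{pmatrix}=p^{k/2}f(p\tau)$. Multiplying by the prefactor $p^{k/2-1}$ and by $\chi_3(p)$ gives $\chi_3(p)^2\,p^{k-1}f(p\tau)$.

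The main obstacle is pinning down the exact normalization of the character factor. Either $\chi_3(p)$ appears once, or it appears twice from the two conventions ($\chi$ versus $\bar\chi$ in the definition of $T_p$ and in the transformation law). Because $\chi_3$ is quadratic, $\bar\chi_3=\chi_3$. The standard formula for $T_p$ on $q$-expansions is $a_{np}+\chi(p)p^{k-1}a_{n/p}$, and I will fix the convention of the coset sum so that it reproduces this formula. Concretely, I use $\chi_3(p)$ from $f|\sigma_p$ and no extra factor, i.e.\ the representative $\sigma_p\begin{pmatrix}p&0\\0&1\end{pmatrix}$ alone without the prefactor $\chi_3(p)$ written above. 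This yields $\chi_3(p)p^{k-1}V_p(f)$. I will also check consistency against the holomorphic case: this formula makes $C=3E_{5,\chi_0,\chi_3}$ a Hecke eigenform with eigenvalue $\sigma_{4,\chi_3}(p)=1+\chi_3(p)p^4$, in agreement with Lemma~\ref{lem:euler-factor}.

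Adding the two contributions gives $T_pf=\Lambda_p(f)+\chi_3(p)p^{k-1}V_p(f)$. Setting $k=5$ gives $T_p=\Lambda_p+\chi_3(p)p^4V_p$.
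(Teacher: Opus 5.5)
Your proof is correct and follows essentially the paper's route. The paper takes $T_pf(\tau)=\chi_3(p)p^{k-1}f(p\tau)+\frac1p\sum_{b}f\bigl((\tau+b)/p\bigr)$ as the definition, which is equivalent to the slash form $p^{k/2-1}\bigl(\sum_b f|_k\alpha_b+\chi_3(p)f|_k\beta\bigr)$ with $\beta=\begin{pmatrix}p&0\\0&1\end{pmatrix}$ used in Lemma~\ref{lem:fricke-hecke}, and it expands this with the same root-of-unity orthogonality. The one thing to tidy is your first display: it carries $\chi_3(p)$ twice (once via $\sigma_p$ and once via the scalar prefactor) and would give $p^{k-1}V_p(f)$ with no character; the version you settle on, with $\sigma_p\beta$ alone, is the standard double-coset operator and gives the stated formula.
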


\begin{proof}
For $p\nmid3$, the usual Hecke operator on $M_k^!(\Gamma_0(3),\chi_3)$ is given by
\[
T_p f(\tau)=\chi_3(p)p^{k-1}f(p\tau)+\frac1p\sum_{b=0}^{p-1} f\!\left(\frac{\tau+b}{p}\right).
\]
The double-coset definition shows that $T_p$ preserves both the level and the nebentypus.

Now expand $f$ as a Laurent series. The first term is
\[
\chi_3(p)p^{k-1}f(p\tau)=\chi_3(p)p^{k-1}\sum_{n\gg-\infty} a_n q^{pn}
=
\chi_3(p)p^{k-1}V_p(f).
\]
For the second term, write
\[
f\!\left(\frac{\tau+b}{p}\right)=\sum_{n\gg-\infty} a_n \zeta_p^{bn}q^{n/p}.
\]
Summing over $b$ gives
\[
\frac1p\sum_{b=0}^{p-1}f\!\left(\frac{\tau+b}{p}\right)
=\sum_{n\gg-\infty} a_n\left(\frac1p\sum_{b=0}^{p-1}\zeta_p^{bn}\right)q^{n/p}
=\sum_{m\gg-\infty} a_{pm}q^m
=\Lambda_p(f),
\]
because
\[
\frac1p\sum_{b=0}^{p-1}\zeta_p^{bn}=
\begin{cases}
1,& p\mid n,\\
0,& p\nmid n.
\end{cases}
\]
This proves the formula.
\end{proof}

For each integer $r\ge1$ set
\[
F_r(q):=\Lambda_p\!\left(\frac{C(q)}{t(q)^{rp}}\right)-\frac{C(q)}{t(q)^r},
\qquad
\widetilde G_r:=T_p\!\left(\frac{C}{t^{rp}}\right)-\frac{C}{t^r}.
\]
For later use, note that
\[
\frac{C(q)}{t(q)^m}=q^{-m}C(q)H(q)^m\in q^{-m}\Z[[q]]
\qquad (m\ge0),
\]
since $C(q)\in\Z[[q]]$ and $H(q)=q/t(q)\in 1+q\Z[[q]]$. In particular, all coefficientwise
congruences below refer to $\Z$-valued coefficients of explicit Laurent series.

\begin{proposition}\label{prop:defect-congruence}
For every $r\ge1$ one has
\[
F_r(q)\equiv \widetilde G_r(q)\pmod{p^4}.
\]
Consequently, $F_r\bmod p^4$ is represented by a weakly holomorphic modular form of weight $5$ and character $\chi_3$ on $\Gamma_0(3)$.
\end{proposition}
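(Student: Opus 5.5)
The plan is to subtract the two definitions and apply Lemma~\ref{lem:hecke-decomp}. First we check that $f:=C/t^{rp}$ lies in $M_5^!(\Gamma_0(3),\chi_3)$. By Theorem~\ref{thm:modular}, $C=3E_{5,\chi_0,\chi_3}$ is a holomorphic modular form of weight $5$ on $\Gamma_0(3)$ with character $\chi_3$. The function $t=\eta(3\tau)^{12}/\eta(\tau)^{12}$ is a Hauptmodul for $\Gamma_0(3)$, hence of weight $0$ and trivial character, with a simple zero at $\infty$ and no zeros or poles in $\mathbb{H}$. So $t^{-rp}$ is weakly holomorphic of weight $0$, with its only pole at $\infty$. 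The product $f$ is therefore weakly holomorphic of weight $5$ and character $\chi_3$. Lemma~\ref{lem:hecke-decomp} then gives
\[
T_p\!\left(\frac{C}{t^{rp}}\right)=\Lambda_p\!\left(\frac{C}{t^{rp}}\right)+\chi_3(p)p^4V_p\!\left(\frac{C}{t^{rp}}\right).
\]

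Subtracting $C/t^r$ from both sides yields the exact identity
\[
\widetilde G_r-F_r=\chi_3(p)p^4\,V_p\!\left(\frac{C}{t^{rp}}\right).
\]
By the integrality remark preceding the proposition, $C/t^{rp}\in q^{-rp}\Z[[q]]$. Since $V_p$ only substitutes $q\mapsto q^p$, the series $V_p(C/t^{rp})$ lies in $q^{-rp^2}\Z[[q]]$. Each coefficient of the right-hand side is therefore $p^4$ times an integer, which is the coefficientwise congruence $F_r\equiv\widetilde G_r\pmod{p^4}$. The same integrality also shows that $\widetilde G_r$ has integer coefficients.

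For the consequence, $T_p$ preserves $M_5^!(\Gamma_0(3),\chi_3)$ by Lemma~\ref{lem:hecke-decomp}, which uses the double-coset description and $p\nmid 3$. Also $C/t^r$ is weakly holomorphic of the same weight and character, by the argument of the first paragraph. Hence $\widetilde G_r\in M_5^!(\Gamma_0(3),\chi_3)$, and it represents $F_r\bmod p^4$.

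The only point needing care is the modularity of $C/t^{rp}$, since the decomposition in Lemma~\ref{lem:hecke-decomp} is stated only for such forms. Specifically, one must check that $t$ is invariant under $\Gamma_0(3)$ with trivial character and has no zeros in $\mathbb{H}$. The second fact is immediate because $\eta$ does not vanish on $\mathbb{H}$. For the first, I would cite the standard eta-quotient criterion: the exponents satisfy $\sum \delta r_\delta=36-12=24\equiv0\pmod{24}$ and $\sum (3/\delta)r_\delta=-36+12\equiv0\pmod{24}$, and the weight is $0$. The character is $\left(\frac{(-1)^0\cdot 3^{12}}{d}\right)=1$, so $t$ has trivial character as required.
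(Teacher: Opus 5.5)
Your proof is correct and is essentially the paper's argument: apply Lemma~\ref{lem:hecke-decomp} to $C/t^{rp}$, subtract $C/t^r$, and observe that the remaining term $\chi_3(p)p^4V_p(C/t^{rp})$ has coefficients in $p^4\Z$. Your eta-quotient check that $C/t^{rp}\in M_5^!(\Gamma_0(3),\chi_3)$ makes explicit a hypothesis of the lemma that the paper uses without comment.
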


\begin{proof}
Apply Lemma~\ref{lem:hecke-decomp} with $k=5$ to the weakly holomorphic modular form $C/t^{rp}$:
\[
T_p\!\left(\frac{C}{t^{rp}}\right)
=
\Lambda_p\!\left(\frac{C}{t^{rp}}\right)+\chi_3(p)p^4V_p\!\left(\frac{C}{t^{rp}}\right).
\]
Subtract $C/t^r$ from both sides. The second term on the right is coefficientwise divisible by $p^4$, so the stated congruence follows.
\end{proof}

\begin{proposition}\label{prop:defect-pole-order}
For every $r\ge1$, the defect $F_r(q)$ has pole order at most $r-1$ at the cusp $\infty$:
\[
F_r(q)=O(q^{-r+1}).
\]
\end{proposition}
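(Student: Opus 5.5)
The plan is to compare the principal parts of the two terms of $F_r$ directly, using only the Laurent expansions recorded just before Proposition~\ref{prop:defect-congruence}, and in particular the identity
\[
\frac{C(q)}{t(q)^m}=q^{-m}C(q)H(q)^m\in q^{-m}\Z[[q]],
\]
where $C(q)\in 1+q\Z[[q]]$ and $H(q)\in 1+q\Z[[q]]$. Hence $C(q)H(q)^m\in 1+q\Z[[q]]$, and $C/t^m=q^{-m}+O(q^{-m+1})$ with leading coefficient exactly $1$.

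First I will treat the $\Lambda_p$ term. Apply the expansion with $m=rp$:
\[
\frac{C}{t^{rp}}=\sum_{n\ge -rp}a_nq^n,\qquad a_{-rp}=1.
\]
By definition $\Lambda_p$ keeps only the exponents divisible by $p$ and divides them by $p$, so
\[
\Lambda_p\!\left(\frac{C}{t^{rp}}\right)=\sum_{k\ge -r}a_{kp}q^k .
\]
Thus this term has pole order at most $r$, and its $q^{-r}$-coefficient is $a_{-rp}=1$.

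Second, the expansion with $m=r$ gives $C/t^r=q^{-r}+O(q^{-r+1})$, again with $q^{-r}$-coefficient $1$, and no terms of lower order. Subtracting, the $q^{-r}$ terms cancel exactly, and no term of order below $-r$ appears in either piece. This gives $F_r(q)=O(q^{-r+1})$.

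There is no real obstacle here. The only point to check carefully is that the leading coefficients of $C$ and $H$ are both $1$, so that both $q^{-r}$-coefficients equal $1$ exactly, and not merely modulo $p^4$. This follows from $C=1+\sum_{n\ge1}c_nq^n$ (Theorem~\ref{thm:modular}) and from $H\in 1+q\Z[[q]]$ by the product formula in Section~\ref{sec:eisenstein}.
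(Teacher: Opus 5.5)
Your proposal is correct and follows the paper's own proof. Both write $C/t^m=q^{-m}CH^m$ with $CH^m\in 1+q\Z[[q]]$. Both note that $\Lambda_p$ sends the leading $q^{-rp}$ term of $C/t^{rp}$ to $q^{-r}$ with coefficient exactly $1$, and that this cancels the leading $q^{-r}$ term of $C/t^r$.
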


\begin{proof}
Since $t(q)=q+O(q^2)$ and $H(q)=q/t(q)\in1+q\Z[[q]]$, we have
\[
\frac{C(q)}{t(q)^{rp}}=q^{-rp}C(q)H(q)^{rp}=q^{-rp}(1+O(q)).
\]
Applying $\Lambda_p$ gives
\[
\Lambda_p\!\left(\frac{C(q)}{t(q)^{rp}}\right)=q^{-r}(1+O(q)).
\]
On the other hand,
\[
\frac{C(q)}{t(q)^r}=q^{-r}C(q)H(q)^r=q^{-r}(1+O(q)).
\]
The principal coefficient $q^{-r}$ therefore cancels, and $F_r(q)=O(q^{-r+1})$.
\end{proof}

\subsection*{5.2. Fricke identities and the weight-$5$ space on $\Gamma_0(3)$}

Write
\[
\Theta(\tau):=\frac{\eta(\tau)^9}{\eta(3\tau)^3},
\qquad
D:=q\frac{\dd}{\dd q}=\frac{1}{2\pi i}\frac{\dd}{\dd\tau},
\]
so that
\[
C(\tau)=\Theta(\tau)\,\frac{Dt(\tau)}{t(\tau)}.
\]

\begin{proposition}\label{prop:fricke-C}
Let
\[
w_3:=\begin{pmatrix}0&-1\\ 3&0\end{pmatrix},
\qquad
W_3f:=f|_k w_3.
\]
Then
\[
t|_0W_3=\frac{3^{-6}}{t},
\qquad
\Theta|_3W_3=27i\,t\Theta,
\qquad
\left(\frac{Dt}{t}\right)\!\Big|_2W_3=-\frac{Dt}{t}.
\]
Consequently,
\[
C|W_3=-27i\,tC.
\]
Since $W_3^2=-\mathrm{id}$ on weight $5$, one also has
\[
(tC)|W_3=-\frac{i}{27}C.
\]
Therefore
\[
\ord_0(C)=1,
\qquad
\ord_0(tC)=0.
\]
In particular, for every integer $j\ge0$,
\[
\ord_0\!\left(\frac{C}{t^j}\right)=j+1.
\]
\end{proposition}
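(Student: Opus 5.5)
The plan is to compute the three Fricke transforms directly from the transformation law of $\eta$ under $\tau\mapsto -1/(3\tau)$, namely $\eta(-1/\tau)=\sqrt{-i\tau}\,\eta(\tau)$, and then to assemble them multiplicatively.

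For $t$, I will write $t(-1/(3\tau))=\eta(-1/\tau)^{12}/\eta(-1/(3\tau))^{12}$. Then $\eta(-1/\tau)^{12}=(-i\tau)^6\eta(\tau)^{12}$ and $\eta(-1/(3\tau))^{12}=(-3i\tau)^6\eta(3\tau)^{12}$, so the ratio is $3^{-6}\eta(\tau)^{12}/\eta(3\tau)^{12}=3^{-6}/t$. This is weight $0$, so $t|_0W_3=3^{-6}/t$.

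For $\Theta$ the same substitution gives
\[
\Theta(-1/(3\tau))=\frac{(-i\tau)^{9/2}\eta(\tau)^9}{(-3i\tau)^{3/2}\eta(3\tau)^3}=3^{-3/2}(-i\tau)^3\,\frac{\eta(\tau)^9}{\eta(3\tau)^3}.
\]
Multiplying by the weight-$3$ slash factor $\det(w_3)^{3/2}(3\tau)^{-3}=3^{3/2}\cdot 3^{-3}\tau^{-3}$ should produce a constant multiple of $\eta(3\tau)^9/\eta(\tau)^3$, which is $\Theta\cdot\eta(3\tau)^{12}/\eta(\tau)^{12}=t\Theta$. The only care needed is tracking the powers of $-i$ and $3$ to land exactly on $27i$. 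I will fix the normalization of $|_kw_3$ (with or without $\det^{k/2}$) to match the stated constants, since the paper's constants presumably dictate this choice. This bookkeeping is where I expect the only real friction.

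For $Dt/t$, I will differentiate the identity $t(w_3\tau)=3^{-6}/t(\tau)$. Using $\frac{d}{d\tau}(w_3\tau)=1/(3\tau^2)$, the logarithmic derivative satisfies
\[
\frac{(Dt/t)(w_3\tau)}{3\tau^2}=-\frac{Dt}{t}(\tau).
\]
With the weight-$2$ slash normalization $\det^{1}(3\tau)^{-2}$ this gives $(Dt/t)|_2W_3=-Dt/t$. Since slash is multiplicative across weights, $C|_5W_3=(\Theta|_3W_3)\,((Dt/t)|_2W_3)=-27i\,tC$.

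Next I apply $W_3$ again. Because $W_3^2=w_3^2=-3I$ acts as $-1$ in odd weight, $-C=(C|W_3)|W_3=-27i\,(t|_0W_3)(C|W_3)$. Substituting the formulas gives $(t|W_3)(C|W_3)=3^{-6}t^{-1}\cdot(-27i)tC$, and comparing with the stated value $(tC)|W_3=-\tfrac{i}{27}C$ is consistent, since $3^{-6}\cdot 27=1/27$.

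For the orders at $0$, the expansion at the cusp $0$ is read off from $f|W_3$ at $\infty$. Since $C|W_3=-27i\,tC=-27i\,q+O(q^2)$, we get $\ord_0C=1$. Since $(tC)|W_3$ is a constant multiple of $C=1+O(q)$, we get $\ord_0(tC)=0$. Finally, $(C/t^j)|W_3=(C|W_3)(t|W_3)^{-j}=-27i\cdot3^{6j}\,t^{j+1}C$, which has $q$-order $j+1$ at $\infty$, so $\ord_0(C/t^j)=j+1$.
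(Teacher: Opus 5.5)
Your route is the paper's: the $\eta$ inversion formula for $t$ and $\Theta$, differentiation of $t|_0W_3=3^{-6}/t$ for $Dt/t$, multiplicativity of the slash action, and reading $\ord_0$ off from $f|W_3$ at $\infty$. Your $t$ and $Dt/t$ computations are correct. Getting $(tC)|W_3=(t|_0W_3)(C|W_3)$ directly from multiplicativity, rather than from $W_3^2=-\mathrm{id}$ as the paper does, is a fine and slightly more direct variant.

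The $\Theta$ step, however, is wrong as written, and it is the step that decides $\ord_0(C)$. Since $\Theta(\tau)=\eta(\tau)^9/\eta(3\tau)^3$, you have $\Theta(-1/(3\tau))=\eta(-1/(3\tau))^9/\eta(-1/\tau)^3$. You copied the pattern from $t$ and put $\eta(-1/\tau)^9$ in the numerator and $\eta(-1/(3\tau))^3$ in the denominator. Your displayed value $3^{-3/2}(-i\tau)^3\Theta(\tau)$, multiplied by the slash factor $3^{3/2}(3\tau)^{-3}$, gives $\Theta|_3W_3=\frac{i}{27}\Theta$. That would force $C|W_3=-\frac{i}{27}C$ and $\ord_0(C)=0$, the opposite of the claim. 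Your sentence that the product ``should produce a constant multiple of $\eta(3\tau)^9/\eta(\tau)^3$'' is therefore contradicted by your own display.

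The fix is to use the correct substitution. From $\eta(-1/(3\tau))=(-3i\tau)^{1/2}\eta(3\tau)$ and $\eta(-1/\tau)=(-i\tau)^{1/2}\eta(\tau)$,
\[
\Theta\!\left(-\frac{1}{3\tau}\right)=3^{9/2}(-i\tau)^3\,\frac{\eta(3\tau)^9}{\eta(\tau)^3}.
\]
Then $3^{3/2}(3\tau)^{-3}\cdot 3^{9/2}(-i\tau)^3=3^3(-i)^3=27i$, so $\Theta|_3W_3=27i\,t\Theta$ exactly.

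There is also no normalization to ``fix to match the stated constants.'' The factor $\det(\gamma)^{k/2}$ is what you already used in weights $2$ and $3$. It is also forced by the assertion that $W_3^2$ acts as $-\mathrm{id}$ in weight $5$: without it, $w_3^2=-3I$ would act by $(-3)^{-5}$.

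With this correction, the rest of your argument goes through as in the paper: $C|W_3=-27i\,tC$, $(tC)|W_3=-\frac{i}{27}C$, and $\ord_0(C/t^j)=j+1$.
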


\begin{proof}
The identity $t|_0W_3=3^{-6}/t$ is equivalent to
\[
t\!\left(-\frac{1}{3\tau}\right)=\frac{3^{-6}}{t(\tau)},
\]
which follows immediately from the eta-quotient definition of $t$ and the transformation formula
\[
\eta\!\left(-\frac1\tau\right)=(-i\tau)^{1/2}\eta(\tau).
\]

For $\Theta$, the weight-$3$ slash formula gives
\[
(\Theta|_3W_3)(\tau)
=
3^{3/2}(3\tau)^{-3}\Theta\!\left(-\frac{1}{3\tau}\right).
\]
Using the eta transformation in the numerator and denominator,
\[
\Theta\!\left(-\frac{1}{3\tau}\right)
=
\frac{\eta(-1/(3\tau))^9}{\eta(-1/\tau)^3}
=
3^{9/2}(-i\tau)^3\frac{\eta(3\tau)^9}{\eta(\tau)^3},
\]
and therefore
\[
\Theta|_3W_3
=
27i\,\frac{\eta(3\tau)^9}{\eta(\tau)^3}
=
27i\,t\Theta.
\]

We now compute $(Dt/t)|_2W_3$. Applying the chain rule to the explicit identity
$t|_0W_3=3^{-6}/t$ gives
\[
D(t|_0W_3)=D\!\left(\frac{3^{-6}}{t}\right)=-\frac{3^{-6}}{t^2}\,Dt.
\]
On the other hand, for any meromorphic function $f$ of weight $0$ on $\mathfrak H$, the
derivative $Df$ is of weight $2$ in the sense that the chain rule gives
\[
D(f|_0\alpha)=(Df)|_2\alpha
\qquad (\alpha\in GL_2^+(\Q)),
\]
and applying this with $f=t$ and $\alpha=w_3$ yields $(Dt)|_2W_3=D(t|_0W_3)=-3^{-6}t^{-2}Dt$.
Dividing by $t|_0W_3=3^{-6}/t$ gives
\[
\left(\frac{Dt}{t}\right)\!\Big|_2W_3
=
\frac{(Dt)|_2W_3}{t|_0W_3}
=
\frac{-3^{-6}t^{-2}Dt}{3^{-6}t^{-1}}
=
-\frac{Dt}{t}.
\]
Since $C=\Theta\cdot Dt/t$, the product formula for slash operators gives
\[
C|W_3
=
(\Theta|_3W_3)\bigl((Dt/t)|_2W_3\bigr)
=
(27it\Theta)\!\left(-\frac{Dt}{t}\right)
=
-27i\,tC.
\]

Now
\[
w_3^2=-3I,
\]
so for every weight-$5$ form $f$ one has
\[
(f|W_3)|W_3=f|_5(-3I)=-f.
\]
Applying $W_3$ to the identity $C|W_3=-27i\,tC$ gives
\[
-C=-27i\,(tC)|W_3,
\]
hence
\[
(tC)|W_3=-\frac{i}{27}C.
\]
Because $\ord_\infty(C)=0$ and $\ord_\infty(tC)=1$, this yields
\[
\ord_0(C)=\ord_\infty(C|W_3)=1,
\qquad
\ord_0(tC)=\ord_\infty((tC)|W_3)=0.
\]
Finally,
\[
\ord_0\!\left(\frac{C}{t^j}\right)=\ord_0(C)-j\,\ord_0(t)=1+j
\qquad (j\ge0),
\]
because $\ord_0(t)=-1$.
\end{proof}

\begin{proposition}\label{prop:weight5-basis}
One has
\[
\dim M_5(\Gamma_0(3),\chi_3)=2,
\qquad
M_5(\Gamma_0(3),\chi_3)=\Span\{C,tC\}.
\]
More precisely,
\[
C=3E_{5,\chi_0,\chi_3},
\qquad
tC=E_{5,\chi_3,\chi_0}.
\]
The two basis elements are linearly independent, since
\[
C(q)=1+3q-45q^2+3q^3+\cdots,
\qquad
tC(q)=q+15q^2+81q^3+\cdots.
\]
\end{proposition}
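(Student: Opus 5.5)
The plan has three parts: compute the dimension, check that $C$ and $tC$ lie in the space and are independent, and match them with the two Eisenstein series.

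\emph{Dimension.} I would use the valence formula for $\Gamma_0(3)$. This group has index $4$ in $\mathrm{SL}_2(\Z)$, two cusps ($\infty$ and $0$), one elliptic point of order $3$, and no elliptic points of order $2$. For a nonzero $f\in M_5(\Gamma_0(3),\chi_3)$ the weighted count of zeros is therefore $5\cdot 4/12=5/3$. Since $\chi_3(-1)=-1$ and the weight is odd, the space is consistent and can be nonzero.
\begin{itemize}
\item Proposition~\ref{prop:fricke-C} shows that $C$ has total order $\ord_\infty(C)+\ord_0(C)=0+1=1$ at the cusps.
\item The remaining $2/3$ must sit at the elliptic point $\rho=(-3+\sqrt{-3})/6$ as a forced zero of order $2$ there. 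A forced zero is unavoidable here: weight $5$ with nontrivial character at an order-$3$ point forces vanishing to order $\equiv 2 \pmod 3$.
\end{itemize}
With this local information, the ratio $f/C$ for any $f$ in the space is a weight-$0$ meromorphic function on $X_0(3)$. Its only possible pole is a simple pole at the cusp $0$, because $C$ has its forced zeros of minimal order at $\rho$ and no other zeros. Since $t$ is a Hauptmodul with a simple pole at $0$ (as $\ord_0(t)=-1$), we get $f/C\in\Span\{1,1/t\}$, or equivalently, after normalizing the pole, $f\in\Span\{C,tC\}$. Hence $\dim\le 2$.

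\emph{Membership and independence.}
\begin{itemize}
\item $C\in M_5(\Gamma_0(3),\chi_3)$ by Theorem~\ref{thm:modular}, since $C$ is a multiple of the Eisenstein series $E_{5,\chi_0,\chi_3}$.
\item $tC$ is holomorphic at $\infty$ because $t=q+\cdots$. It is holomorphic at $0$ because $\ord_0(tC)=0$ by Proposition~\ref{prop:fricke-C}. It is holomorphic on $\mathfrak H$ because $t$ is holomorphic there, being an eta quotient. Hence $tC$ lies in the space.
\item Independence is immediate from the $q$-expansions: $C$ begins with the constant $1$, while $tC=q+\cdots$ has vanishing constant term. These expansions come from $c_n=3\sigma_{4,\chi_3}(n)$ and from $t=q+12q^2+90q^3+\cdots$.
\end{itemize}
So $\dim=2$.

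\emph{Identification of $tC$.} The series $E_{5,\chi_3,\chi_0}=\sum_{n\ge1}\bigl(\sum_{d\mid n}\chi_3(n/d)d^4\bigr)q^n$ lies in the space and has no constant term. Its expansion begins $q+(16-1)q^2+(81)q^3+\cdots=q+15q^2+81q^3+\cdots$. Every element of $\Span\{C,tC\}$ with vanishing constant term is a scalar multiple of $tC$, and both series start with $q^1$ coefficient $1$. Therefore $tC=E_{5,\chi_3,\chi_0}$. As a sanity check, $tC$ begins $q+(3+12)q^2+\cdots=q+15q^2+\cdots$, which matches.

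\emph{Main obstacle.} I expect the hardest part to be the elliptic-point bookkeeping in the dimension bound. A clean alternative is to quote the standard dimension formula (Cohen--Oesterl\'e) for $M_k(\Gamma_0(N),\chi)$, which gives $\dim M_5(\Gamma_0(3),\chi_3)=2$ directly; I would use that if the valence argument gets delicate.
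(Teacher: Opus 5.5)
Your proof is correct. For membership of $tC$ (via $\ord_0(tC)=0$), independence, and the identification $tC=E_{5,\chi_3,\chi_0}$ (via the one-dimensional subspace of forms with vanishing constant term), you follow the paper. The difference is in how you bound the dimension.

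The paper simply quotes the standard dimension formula: $S_5(\Gamma_0(3),\chi_3)=0$ and the Eisenstein subspace is two-dimensional. You instead prove $M_5(\Gamma_0(3),\chi_3)\subseteq\Span\{C,tC\}$ directly:
\begin{itemize}
\item The valence formula gives a total zero count of $5\cdot 4/12=5/3$. With $\ord_\infty C=0$ and $\ord_0 C=1$ from Proposition~\ref{prop:fricke-C}, the divisor of $C$ must be a double zero at $\rho$ and nothing else in $\mathfrak H$.
\item The local congruence $\ord_\rho f\equiv 2\pmod 3$ gives $\ord_\rho f\ge 2$ for every $f$ in the space. You assert this rather than prove it, but it is right: $\gamma=\begin{pmatrix}-1&-1\\3&2\end{pmatrix}$ fixes $\rho$, $3\rho+2=e^{\pi i/3}$ and $\chi_3(2)=-1$, so $(z-\bar\rho)^5f$ picks up the factor $e^{2\pi i/3}$ while the local coordinate $(z-\rho)/(z-\bar\rho)$ is multiplied by $e^{-2\pi i/3}$.
\item Hence $f/C$ is a function on $X_0(3)$ with at most a simple pole at the cusp $0$. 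Subtracting a multiple of $t$ leaves a holomorphic, hence constant, function.
\end{itemize}
This route is self-contained and uses only data the paper has already established. It yields the basis and the dimension at once, and it gives the divisor of $C$ as a by-product. The paper's citation is shorter but rests entirely on the external formula.

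One slip needs fixing. You wrote $f/C\in\Span\{1,1/t\}$, but $1/t=q^{-1}+\cdots$ has its pole at $\infty$, while the pole you need to absorb is at the cusp $0$, where $\ord_0(t)=-1$. The correct space is $\Span\{1,t\}$. Saying "after normalizing the pole" does not make the two equivalent. $\Span\{1,t\}$ is what gives your (correct) conclusion $f\in\Span\{C,tC\}$.
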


\begin{proof}
By Theorem~\ref{thm:modular}, the form $C=3E_{5,\chi_0,\chi_3}$ is holomorphic of weight $5$ on
$\Gamma_0(3)$ with character $\chi_3$. Since $t$ is a weight-$0$ modular function on $\Gamma_0(3)$ and
Proposition~\ref{prop:fricke-C} gives $\ord_0(tC)=0$, the form $tC$ is also holomorphic of weight $5$ with
character $\chi_3$. The displayed $q$-expansions show that $C$ and $tC$ are linearly independent.

The dimension $\dim M_5(\Gamma_0(3),\chi_3)=2$ follows from the standard dimension formula for spaces of
modular forms with character on $\Gamma_0(N)$ \cite[\S3.5 and \S3.6]{DS}; concretely, for odd weight~$5$
on $\Gamma_0(3)$ with the quadratic character $\chi_3$, the space of cusp forms $S_5(\Gamma_0(3),\chi_3)$
vanishes, while the Eisenstein subspace is spanned by the two Eisenstein series $E_{5,\chi_0,\chi_3}$ and
$E_{5,\chi_3,\chi_0}$ associated to the two decompositions $\chi_3=\chi_0\cdot\chi_3$ and
$\chi_3=\chi_3\cdot\chi_0$ of the character as a product of Dirichlet characters modulo divisors of $3$.
Hence $C$ and $tC$ form a basis of this space.
The identity $C=3E_{5,\chi_0,\chi_3}$ is
Theorem~\ref{thm:modular}. The form $E_{5,\chi_3,\chi_0}$ has zero constant term and Fourier expansion
\[
E_{5,\chi_3,\chi_0}(q)
=
\sum_{n\ge1}\left(\sum_{d\mid n}\chi_3(n/d)d^4\right)q^n
=
q+15q^2+81q^3+\cdots.
\]
Since the subspace of forms in $M_5(\Gamma_0(3),\chi_3)$ with vanishing constant term at $\infty$ is
one-dimensional and $tC$ has the same leading coefficient, it follows that
\[
tC=E_{5,\chi_3,\chi_0}.
\]
\end{proof}

\begin{proposition}\label{prop:defect-only-infty}
For every $r\ge1$, the exact defect
\[
\widetilde G_r:=T_p\!\left(\frac{C}{t^{rp}}\right)-\frac{C}{t^r}
\]
is a weakly holomorphic modular form of weight $5$ on $\Gamma_0(3)$ with character $\chi_3$,
holomorphic at the cusp $0$ and with poles only at the cusp $\infty$.
\end{proposition}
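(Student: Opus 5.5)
The plan is to check the three required properties of $\widetilde G_r$ one at a time: modularity, holomorphy on $\mathfrak H$, and behaviour at the two cusps $\infty$ and $0$ of $\Gamma_0(3)$. The last of these, at the cusp $0$, is the only substantive point. Throughout, write $f_r:=C/t^{rp}$.

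\emph{Modularity and holomorphy on $\mathfrak H$.} Since $C\in M_5(\Gamma_0(3),\chi_3)$ (Proposition~\ref{prop:weight5-basis}) and $t$ is a $\Gamma_0(3)$-invariant function, $f_r$ and $C/t^r$ transform with weight $5$ and character $\chi_3$. The function $t=\eta(3\tau)^{12}/\eta(\tau)^{12}$ has no zeros or poles on $\mathfrak H$, because $\eta$ does not vanish there. Hence $f_r$ and $C/t^r$ are holomorphic on $\mathfrak H$. By Lemma~\ref{lem:hecke-decomp} and the double-coset description, $T_p$ preserves level and nebentypus. The formula
\[
T_pf(\tau)=\chi_3(p)p^{4}f(p\tau)+\tfrac1p\sum_{b=0}^{p-1} f\!\left(\tfrac{\tau+b}{p}\right)
\]
expresses $T_pf$ as a finite combination of holomorphic functions on $\mathfrak H$. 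So $\widetilde G_r$ is holomorphic on $\mathfrak H$ and has weight $5$ and character $\chi_3$. At $\infty$ both terms are Laurent series in $q$ with finite principal part, so at most poles occur there.

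\emph{The cusp $0$.} By Proposition~\ref{prop:fricke-C}, $\ord_0(C/t^r)=r+1\ge0$, so $C/t^r$ is holomorphic at $0$. For $T_pf_r$, I will transport the question to $\infty$ through $W_3$. First, Proposition~\ref{prop:fricke-C} gives
\[
f_r|W_3=(C|W_3)\,(t|_0W_3)^{-rp}=-27i\cdot 3^{6rp}\,t^{rp+1}C ,
\]
which is a $q$-series vanishing to order $rp+1$ at $\infty$.

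The key step is the commutation relation
\[
(T_pf)|W_3=\chi_3(p)\,T_p(f|W_3)\qquad (p\nmid 3).
\]
I would prove it directly on coset representatives. The matrices $\alpha\in\{\begin{pmatrix}1&b\\0&p\end{pmatrix}\}\cup\{\sigma_p\begin{pmatrix}p&0\\0&1\end{pmatrix}\}$, with $\sigma_p\in\Gamma_0(3)$, $\sigma_p\equiv\begin{pmatrix}p^{-1}&0\\0&p\end{pmatrix}\pmod 3$, give a system of representatives for the double coset $\Gamma_0(3)\begin{pmatrix}1&0\\0&p\end{pmatrix}\Gamma_0(3)$. Conjugation by $w_3$ preserves the set of integer matrices of determinant $p$ in $\begin{pmatrix}\Z&\Z\\3\Z&\Z\end{pmatrix}$, and it permutes these cosets. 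The nebentypus factors change only by $\chi_3$ evaluated at swapped diagonal entries; since $\chi_3$ is real and $\chi_3(p)^2=1$, this produces the single factor $\chi_3(p)$.

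Granting the commutation relation, $(T_pf_r)|W_3=\chi_3(p)T_p(f_r|W_3)$. Moreover $T_p=\Lambda_p+\chi_3(p)p^4V_p$ maps $q$-series holomorphic at $\infty$ to $q$-series holomorphic at $\infty$. Hence $(T_pf_r)|W_3$ is holomorphic at $\infty$, which means $T_pf_r$ is holomorphic at $0$. Combining this with the previous step, $\widetilde G_r$ is holomorphic at $0$ and has poles only at $\infty$.

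\emph{Main obstacle.} The one delicate step is verifying the Hecke--Fricke commutation with the correct character factor. If the coset bookkeeping becomes messy, a cruder argument suffices, since only holomorphy is needed and not the constant. Each $w_3\alpha w_3^{-1}$ equals $\gamma\alpha'$ with $\gamma\in\Gamma_0(3)$ and $\alpha'$ of determinant $p$. Therefore $(f_r|\alpha)|w_3$ is, up to a root of unity, $(f_r|w_3)|\alpha'$. Each such term is holomorphic at $\infty$ because $f_r|w_3$ is, and $\alpha'$ is upper triangular up to an element of $\Gamma_0(3)$, which acts on $f_r|w_3$ by the Fricke-conjugate character.
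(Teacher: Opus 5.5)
Your argument is correct, but it is not the route the paper takes for this proposition.

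\emph{The paper's route.} It tracks cusps directly. For $\tau=a/c$ with $3\nmid c$, the points $p\tau$ and $(\tau+k)/p$ have reduced denominators prime to $3$. They are therefore again $\Gamma_0(3)$-equivalent to the cusp $0$, where $C/t^{rp}$ is holomorphic. Hence no term in the explicit formula for $T_p(C/t^{rp})$ can produce a pole there.

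\emph{Your route.} You conjugate by $W_3$ and invoke the Fricke--Hecke intertwining. The paper proves this only later, as Lemma~\ref{lem:fricke-hecke}, and uses it in Step~3 of Theorem~\ref{thm:fricke-vanishing}. There is no circularity, since that lemma does not depend on the present proposition.

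\emph{What each buys.} Your route gives a quantitative statement. Combined with Lemma~\ref{lem:ordinf-Tp}, it yields $\ord_0\bigl(T_p(C/t^{rp})\bigr)\ge\lceil (rp+1)/p\rceil=r+1$. This is exactly the bound the paper re-derives in Step~3, so your proof effectively merges this proposition with that step. The cost is the coset bookkeeping with the nebentypus, which the paper carries out explicitly via the matrices $\gamma_b$. The paper's argument is lighter but only qualitative. As written it also treats finiteness ``at'' a rational cusp informally.

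\emph{Your fallback argument.} Its rigorous form is essentially your fallback: factor $\alpha w_3=w_3\gamma\alpha'$ with $\gamma\in\Gamma_0(3)$ and $\alpha'$ upper triangular. Then holomorphy of $f|\alpha$ at the cusp $0$ reduces to holomorphy of $f$ at a cusp equivalent to $0$. You also make explicit the holomorphy on $\mathfrak H$, via nonvanishing of $\eta$, which the paper leaves implicit.
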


\begin{proof}
The form $C/t^{rp}$ belongs to $M_5^!(\Gamma_0(3),\chi_3)$. By Proposition~\ref{prop:fricke-C},
\[
\ord_0\!\left(\frac{C}{t^{rp}}\right)=rp+1,
\qquad
\ord_0\!\left(\frac{C}{t^r}\right)=r+1,
\]
so both $C/t^{rp}$ and $C/t^r$ are holomorphic at the cusp $0$.

Since $p\nmid3$, Lemma~\ref{lem:hecke-decomp} shows that $T_p(C/t^{rp})$ is again a weakly holomorphic
modular form of weight $5$ on $\Gamma_0(3)$ with character $\chi_3$. Writing out the definition,
\[
T_p\!\left(\frac{C}{t^{rp}}\right)(\tau)
=
\chi_3(p)p^4\frac{C(p\tau)}{t(p\tau)^{rp}}
+\frac1p\sum_{k=0}^{p-1}\frac{C((\tau+k)/p)}{t((\tau+k)/p)^{rp}}.
\]
A term can become infinite only if $p\tau$ or $(\tau+k)/p$ is $\Gamma_0(3)$-equivalent to the cusp
$\infty$, because $C/t^{rp}$ has poles only there. We verify that this cannot happen at $\tau$
equivalent to the cusp $0$.

Recall that the cusps of $\Gamma_0(3)$ are represented by $\infty$ and $0$; a reduced fraction
$a/c$ with $\gcd(a,c)=1$ is $\Gamma_0(3)$-equivalent to $\infty$ precisely when $3\mid c$, and to $0$
otherwise. Suppose $\tau$ is equivalent to the cusp $0$, so we may take $\tau=a/c$ with
$\gcd(a,c)=1$ and $3\nmid c$. Then
\[
p\tau=\frac{pa}{c},
\qquad
\frac{\tau+k}{p}=\frac{a+kc}{pc}
\qquad (0\le k\le p-1).
\]
For the first fraction, $\gcd(pa,c)\mid p$ since $\gcd(a,c)=1$, so after reduction the denominator
is either $c$ or $c/p$. In either case it is coprime to $3$, because $3\nmid c$ and $p\ne3$.
For the second fraction, $\gcd(a+kc,pc)\mid p$, so the reduced denominator is either $pc$ or $c$;
again coprime to $3$. Hence both $p\tau$ and $(\tau+k)/p$ are $\Gamma_0(3)$-equivalent to the cusp
$0$, not to $\infty$. Since $C/t^{rp}$ is holomorphic at the cusp $0$, every term in the definition
of $T_p(C/t^{rp})$ remains finite at such $\tau$. Therefore $T_p(C/t^{rp})$, and hence $\widetilde G_r$,
has no poles away from the cusp $\infty$. Since both $C/t^{rp}$ and $C/t^r$ are holomorphic at the
cusp $0$, the defect $\widetilde G_r$ is holomorphic there as well.
\end{proof}

\begin{proposition}\label{prop:pole-lowering}
Let $f\in M_5^!(\Gamma_0(3),\chi_3)$ have a unique pole at the cusp $\infty$, of order at most $N$.
Then
\[
f=\beta_{-1}tC+\sum_{j=0}^{N}\alpha_j\frac{C}{t^j}
\]
for suitable coefficients $\beta_{-1},\alpha_0,\dots,\alpha_N\in\C$.
\end{proposition}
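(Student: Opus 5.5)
Since $t=q+O(q^2)$ and $C=1+O(q)$, the forms $C/t^j$ have $q$-expansions $q^{-j}(1+O(q))$ at $\infty$. The plan is to remove the principal part of $f$ at $\infty$ by a triangular subtraction, and then identify the remainder with an element of $M_5(\Gamma_0(3),\chi_3)=\Span\{C,tC\}$ using Proposition~\ref{prop:weight5-basis}.

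Write $f=\sum_{n\ge -N}a_nq^n$. Set $\alpha_N:=a_{-N}$ and replace $f$ by $f-\alpha_N C/t^N$. This kills the $q^{-N}$ coefficient, and the pole order at $\infty$ drops to at most $N-1$. Iterating down to $j=1$ produces $\alpha_N,\dots,\alpha_1$ such that
\[
g:=f-\sum_{j=1}^{N}\alpha_j\frac{C}{t^j}
\]
is holomorphic at $\infty$. Each $C/t^j$ with $j\ge0$ lies in $M_5^!(\Gamma_0(3),\chi_3)$. By Proposition~\ref{prop:fricke-C} it has $\ord_0(C/t^j)=j+1\ge0$. Because $t$ is an eta quotient, it has no zeros or poles in $\mathfrak H$, so $C/t^j$ is also holomorphic on $\mathfrak H$. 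The hypothesis that $f$ has its only pole at $\infty$ includes holomorphy on $\mathfrak H$ and at the cusp $0$. Hence $g$ is holomorphic on $\mathfrak H$, at $\infty$, and at $0$, which puts $g$ in $M_5(\Gamma_0(3),\chi_3)$.

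By Proposition~\ref{prop:weight5-basis}, $g=\alpha_0C+\beta_{-1}tC$, where $\alpha_0$ is the constant term of $g$ and $\beta_{-1}$ is fixed by its $q^1$ coefficient. This gives the claimed expansion.

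The only point needing care is that the hypothesis "unique pole at $\infty$" is read as holomorphy in $\mathfrak H$ and at the cusp $0$. Holomorphy of $C/t^j$ at $0$ is exactly the content of Proposition~\ref{prop:fricke-C}, and holomorphy of $t^{-1}$ on $\mathfrak H$ follows from the product formula for $\eta$. No other obstacle is expected.
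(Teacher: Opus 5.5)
Your proposal is correct and follows essentially the same route as the paper. Both subtract multiples of $C/t^j$ to clear the principal part at $\infty$, use $\ord_0(C/t^j)=j+1\ge0$ from Proposition~\ref{prop:fricke-C} for holomorphy at the cusp $0$, and identify the remainder in $\Span\{C,tC\}$ via Proposition~\ref{prop:weight5-basis}; your remark that $t^{-1}$ is holomorphic on $\mathfrak H$ because $t$ is an eta quotient makes explicit a point the paper leaves implicit.
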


\begin{proof}
If $N=0$, then $f$ is holomorphic on $X_0(3)$ and Proposition~\ref{prop:weight5-basis} gives
$f\in\Span\{C,tC\}$.

Assume $N\ge1$. Since
\[
\frac{C(q)}{t(q)^N}=q^{-N}(1+O(q)),
\]
there is a unique constant $\lambda_N$ such that
\[
f_N:=f-\lambda_N\frac{C}{t^N}
\]
has pole order at most $N-1$ at $\infty$. Repeating the same step for $f_N$, then for the resulting form,
and so on, we obtain constants $\lambda_1,\dots,\lambda_N$ such that
\[
h:=f-\sum_{j=1}^{N}\lambda_j\frac{C}{t^j}
\]
is holomorphic at $\infty$. Every term $C/t^j$ is holomorphic at the cusp $0$ by
Proposition~\ref{prop:fricke-C}, and $f$ has no poles away from $\infty$ by hypothesis. Hence $h$ is a
holomorphic modular form of weight $5$ on $\Gamma_0(3)$ with character $\chi_3$. By
Proposition~\ref{prop:weight5-basis},
\[
h=\alpha_0 C+\beta_{-1}tC
\]
for some $\alpha_0,\beta_{-1}\in\C$. Renaming $\lambda_j$ as $\alpha_j$ for $1\le j\le N$ gives the stated
decomposition.
\end{proof}

\begin{proposition}\label{prop:rr-decomposition}
For every $r\ge1$, the exact defect $\widetilde G_r$ admits a finite expansion
\[
\widetilde G_r=\beta_{-1}tC+\sum_{j\ge0}\alpha_j\frac{C}{t^j}
\]
with coefficients $\beta_{-1},\alpha_j\in\Q$ and only finitely many nonzero $\alpha_j$.
\end{proposition}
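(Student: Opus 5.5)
The plan is to combine Proposition~\ref{prop:defect-only-infty} with Proposition~\ref{prop:pole-lowering}, and then check rationality of the coefficients separately.

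\emph{Existence of the expansion.} By Proposition~\ref{prop:defect-only-infty}, $\widetilde G_r$ lies in $M_5^!(\Gamma_0(3),\chi_3)$, is holomorphic at the cusp $0$, and has poles only at $\infty$. It remains to bound the pole order at $\infty$. From Lemma~\ref{lem:hecke-decomp},
\[
\widetilde G_r=F_r+\chi_3(p)p^4V_p\!\left(\frac{C}{t^{rp}}\right).
\]
Here $F_r=O(q^{-r+1})$ by Proposition~\ref{prop:defect-pole-order}. The term $V_p(C/t^{rp})=q^{-rp^2}(1+O(q^p))$ has pole order $N:=rp^2$. So $\widetilde G_r$ has pole order at most $rp^2$, and Proposition~\ref{prop:pole-lowering} gives
\[
\widetilde G_r=\beta_{-1}tC+\sum_{j=0}^{rp^2}\alpha_j\,C/t^j.
\]
This is a finite sum, with $\alpha_j=0$ for $j>rp^2$.

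\emph{Rationality.} I will follow the triangular algorithm in the proof of Proposition~\ref{prop:pole-lowering} and track the fields involved. Every $C/t^j$ has a $q$-expansion in $q^{-j}\Z[[q]]$ with leading coefficient $1$, as noted before Proposition~\ref{prop:defect-congruence}. The $q$-expansion of $\widetilde G_r$ is in $\Z((q))$, because $\Lambda_p$ and $V_p$ preserve integrality and $C/t^{r}$, $C/t^{rp}$ are integral. Consequently each $\lambda_j$ is the current coefficient of $q^{-j}$, and it lies in $\Z$ by induction downward from $j=N$. The remainder $h$ is then in $\Z[[q]]$. Its constant term gives $\alpha_0\in\Z$, since $C=1+O(q)$ and $tC=q+O(q^2)$. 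Its $q^1$ coefficient gives $\beta_{-1}=[q^1]h-3\alpha_0\in\Z$. In fact all coefficients are integers, which is stronger than the claim.

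\emph{Main obstacle.} The only delicate point is the pole-order input. One must use the exact form $\widetilde G_r$, not $F_r$, and the $V_p$ term raises the pole order to $rp^2$. This causes no problem, because Proposition~\ref{prop:pole-lowering} allows any finite $N$. One also has to make sure that applying the earlier propositions to $\widetilde G_r$ uses only the hypotheses already established in Proposition~\ref{prop:defect-only-infty}.
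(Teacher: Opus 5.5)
Your proposal is correct and follows the paper's proof. Existence comes from Proposition~\ref{prop:defect-only-infty} combined with Proposition~\ref{prop:pole-lowering}, and the coefficients are pinned down by the unitriangular shape of $tC, C, C/t, C/t^2,\dots$ at $\infty$; your explicit pole bound $rp^2$ (from the $\chi_3(p)p^4V_p$ term) and your observation that the triangular solve lands in $\Z$ are harmless refinements of points the paper leaves implicit or states only over $\mathbf{Q}$.
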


\begin{proof}
By Proposition~\ref{prop:defect-only-infty}, the form $\widetilde G_r$ is weakly holomorphic and has poles
only at the cusp $\infty$. Proposition~\ref{prop:pole-lowering} therefore gives a decomposition of the
required shape.

Its coefficients lie in $\Q$ because the $q$-expansion of $\widetilde G_r$ is rational, the basis elements
$tC$, $C$, $C/t$, $C/t^2,\dots$ have rational $q$-expansions, and this basis is unitriangular with respect
to the order at $\infty$:
\[
tC=q+O(q^2),\qquad C=1+O(q),\qquad \frac{C}{t^j}=q^{-j}(1+O(q))
\qquad (j\ge1).
\]
Hence the coefficients are uniquely determined by the $q$-expansion.
\end{proof}

\subsection*{5.3. Fricke--Hecke intertwining}

\begin{lemma}[Fricke--Hecke intertwining]\label{lem:fricke-hecke}
Let $k\ge0$ and let $p\ge5$ be prime. On $M_k^!(\Gamma_0(3),\chi_3)$ one has
\[
T_pW_3=\chi_3(p)\,W_3T_p.
\]
Equivalently, for every $f\in M_k^!(\Gamma_0(3),\chi_3)$,
\[
T_p(f|W_3)=\chi_3(p)\,(T_pf)|W_3.
\]
\end{lemma}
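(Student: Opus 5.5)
The plan is to prove the identity at the level of slash operators. The identity $T_p(f|W_3)=\chi_3(p)\,(T_pf)|W_3$ is a statement about finite sums of translates of $f$ under elements of $GL_2^+(\Q)$. It does not involve the behaviour of $f$ at cusps, so it holds on $M_k^!$ exactly as on $M_k$.

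\emph{Setup.} Write the formula of Lemma~\ref{lem:hecke-decomp} as a slash sum. Let
\[
\alpha_\infty=\begin{pmatrix}p&0\\0&1\end{pmatrix},\qquad
\alpha_b=\begin{pmatrix}1&b\\0&p\end{pmatrix}\quad(0\le b\le p-1).
\]
Then
\[
T_pf=p^{k/2-1}\Bigl(\chi_3(p)\,f|_k\alpha_\infty+\sum_{b=0}^{p-1}f|_k\alpha_b\Bigr),
\]
with the same normalisation of $|_k$ as in Proposition~\ref{prop:fricke-C}. This reproduces $\chi_3(p)p^{k-1}f(p\tau)+\frac1p\sum_b f((\tau+b)/p)$. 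It is convenient to view these $p+1$ matrices as right coset representatives of $\Gamma_0(3)\backslash\Gamma_0(3)\alpha_\infty\Gamma_0(3)$. The weight $\chi_3(p)$ on $\alpha_\infty$ is then the value of the nebentypus on the lower-right entry of the canonical double-coset representative $\mathrm{diag}(1,p)$, in the usual convention for Hecke operators with character.

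\emph{Conjugation.} Since $(f|W_3)|T_p$ is again a slash sum, it suffices to compute $w_3\alpha w_3^{-1}$ for each representative $\alpha$. Using $w_3^{-1}=\tfrac13\begin{pmatrix}0&1\\-3&0\end{pmatrix}$ one finds
\[
w_3\alpha_bw_3^{-1}=\begin{pmatrix}p&0\\-3b&1\end{pmatrix},\qquad
w_3\alpha_\infty w_3^{-1}=\begin{pmatrix}1&0\\0&p\end{pmatrix}.
\]
For each $b$, factor $\begin{pmatrix}p&0\\-3b&1\end{pmatrix}=\gamma_b\,\beta_b$ with $\gamma_b\in\Gamma_0(3)$ and $\beta_b$ one of the standard representatives $\alpha_{b'}$ or $\alpha_\infty$. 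If $b=0$, this matrix is $\alpha_\infty$ itself. If $b\ne0$, solve $-3bb'\equiv1\pmod p$ to land on some $\alpha_{b'}$. The representative $\alpha_{b'}$ not reached by any $b\neq0$ is exactly the one produced by $w_3\alpha_\infty w_3^{-1}=\alpha_0$. Thus conjugation by $w_3$ permutes the coset representatives; this matching is elementary.

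\emph{Collecting characters.} Write $f|_k\gamma=\chi_3(d_\gamma)f$ for $\gamma\in\Gamma_0(3)$ with lower-right entry $d_\gamma$. Each term of $(T_pf)|W_3$ then becomes $\chi_3(d_{\gamma_b})$ times the corresponding term of $T_p(f|W_3)$. Here one also uses that $f|W_3$ has the same character, since $\chi_3$ is real. Compare this with the weight $\chi_3(p)$, which sits on $\alpha_\infty$ on one side but on $\alpha_0$ on the other. The goal is to show that every term acquires the same factor $\chi_3(p)^{\pm1}=\chi_3(p)$. A determinant argument should give this: $\det\gamma_b=1$, while the lower-right entries of the conjugated matrices and of the $\beta_b$ differ by a factor of $p$ modulo $3$. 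The scalar $-3I$ coming from $w_3^2$ acts only by the fixed sign already used in Proposition~\ref{prop:fricke-C}, so it cancels from both sides.

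\emph{Main obstacle.} I expect the hard step to be this last bookkeeping: keeping track of $d_{\gamma_b}\bmod 3$ and showing that it is uniformly $\equiv p^{\pm1}$, including the swapped roles of $\alpha_\infty$ and $\alpha_0$. I would first do the case $b=0$ and the matching $\alpha_\infty\leftrightarrow\alpha_0$ by hand, then a generic $b$ with explicit $\gamma_b$, reading off $d_{\gamma_b}$. As a consistency check I would test the final identity on the basis of Proposition~\ref{prop:weight5-basis}. There $C$ and $tC$ are Hecke eigenforms with eigenvalues $1+\chi_3(p)p^4$ and $\chi_3(p)+p^4$. Since $C|W_3=-27i\,tC$, the ratio of these eigenvalues is exactly $\chi_3(p)$, which confirms the sign of the statement.
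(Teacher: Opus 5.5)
Your proposal is correct and is essentially the paper's proof. It uses the same slash-sum form of $T_p$ and the same swap $\alpha_0\leftrightarrow\mathrm{diag}(p,1)$. It also uses the same factorization $w_3\alpha_b=\gamma_b\alpha_{b'}w_3$ with $3bb'\equiv-1\pmod p$; your $w_3\alpha_bw_3^{-1}=\gamma_b\alpha_{b'}$ is the identical statement. The determinant observation is also the paper's: $\det\gamma_b=1$ forces $d_{\gamma_b}p\equiv1\pmod 3$, hence $\chi_3(d_{\gamma_b})=\chi_3(p)$.

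One small simplification: no scalar from $w_3^2=-3I$ ever enters, because $f|_k w_3\alpha=f|_k(w_3\alpha w_3^{-1})w_3$ holds exactly for the right action of $GL_2^+(\Q)$.
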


\begin{remark}\label{rem:quadratic-character}
Both sides of the identity implicitly rely on the fact that $\chi_3$ is \emph{quadratic}
($\chi_3^2=\chi_0$). Conjugation by $w_3$ sends
\[
\gamma=\begin{pmatrix}a&b\\3c&d\end{pmatrix}\in\Gamma_0(3)
\quad\longmapsto\quad
w_3\gamma w_3^{-1}=\begin{pmatrix}d&-c\\-3b&a\end{pmatrix}\in\Gamma_0(3),
\]
which swaps the lower-right entries $d$ and $a$. Hence the nebentypus character of $f|_kW_3$ is
$\gamma\mapsto\chi_3(a)$ rather than $\gamma\mapsto\chi_3(d)$. Because $\chi_3(a)=\chi_3(d)$ whenever
$ad\equiv1\pmod{3}$ (which is forced by $\det\gamma=1$ and $3\mid c$), the two characters agree:
$\chi_3(a)\chi_3(d)^{-1}=\chi_3(ad)=\chi_3(1)=1$. Thus $f|_kW_3$ remains in the same nebentypus space
$M_k^!(\Gamma_0(3),\chi_3)$. For a non-quadratic character the analogous statement requires replacing
$\chi$ by $\bar\chi$ on the right-hand side, and the intertwining identity must be reformulated
accordingly.
\end{remark}

\begin{proof}
For $p\nmid3$, the Hecke operator may be written in slash form as
\[
T_pf
=
p^{k/2-1}\left(
\sum_{b=0}^{p-1}f|_k\alpha_b
+\chi_3(p)\,f|_k\beta
\right),
\]
where
\[
\alpha_b:=\begin{pmatrix}1&b\\ 0&p\end{pmatrix},
\qquad
\beta:=\begin{pmatrix}p&0\\ 0&1\end{pmatrix}.
\]
Indeed,
\[
f|_k\alpha_b=p^{-k/2}f\!\left(\frac{\tau+b}{p}\right),
\qquad
f|_k\beta=p^{k/2}f(p\tau),
\]
so after multiplying by $p^{k/2-1}$ one recovers the formula of Lemma~\ref{lem:hecke-decomp}.

We now compare the matrices $w_3\alpha_b$ and $w_3\beta$ with the same coset representatives on the other
side of~$w_3$. First,
\[
w_3\alpha_0
=\begin{pmatrix}0&-p\\ 3&0\end{pmatrix}
=\beta w_3,
\qquad
w_3\beta
=\begin{pmatrix}0&-1\\ 3p&0\end{pmatrix}
=\alpha_0 w_3.
\]
Now let $b\in\{1,\dots,p-1\}$. Choose $b'\in\{1,\dots,p-1\}$ such that
\[
3bb'\equiv -1 \pmod p.
\]
Define
\[
\gamma_b:=
\begin{pmatrix}
p & -b'\\
-3b & \dfrac{1+3bb'}{p}
\end{pmatrix}.
\]
Since $3bb'\equiv-1\pmod p$, the lower-right entry is an integer; moreover
\[
\det(\gamma_b)
=
p\cdot\frac{1+3bb'}{p}-(-b')(-3b)
=
1,
\]
and the lower-left entry is divisible by $3$, so $\gamma_b\in\Gamma_0(3)$. A direct multiplication gives
\[
\gamma_b\alpha_{b'}w_3
=
\begin{pmatrix}0&-p\\ 3&3b\end{pmatrix}
=
w_3\alpha_b.
\]
Hence
\[
w_3\alpha_b=\gamma_b\alpha_{b'}w_3
\qquad (1\le b\le p-1).
\]

Because $f\in M_k^!(\Gamma_0(3),\chi_3)$, for every
$\gamma=\begin{pmatrix}a&b\\ c&d\end{pmatrix}\in\Gamma_0(3)$ one has
\[
f|_k\gamma=\chi_3(d)\,f.
\]
For our $\gamma_b$,
\[
d_b:=\frac{1+3bb'}{p},
\qquad
d_bp-3bb'=1,
\]
so modulo $3$ we get
\[
d_bp\equiv1\pmod3.
\]
Therefore
\[
\chi_3(d_b)=\chi_3(p)^{-1}=\chi_3(p),
\]
since $\chi_3$ is quadratic.

Now compute:
\[
T_p(f|W_3)
=
p^{k/2-1}\left(
\sum_{b=0}^{p-1}f|_k w_3\alpha_b
+\chi_3(p)\,f|_k w_3\beta
\right).
\]
Using the identities above,
\[
f|_k w_3\alpha_0=f|_k\beta w_3,
\]
\begin{align*}
f|_k w_3\alpha_b
&=
f|_k\gamma_b\alpha_{b'}w_3 \\
&=
\chi_3(d_b)\,f|_k\alpha_{b'}w_3 \\
&=
\chi_3(p)\,f|_k\alpha_{b'}w_3
\qquad (1\le b\le p-1).
\end{align*}
and
\[
f|_k w_3\beta=f|_k\alpha_0 w_3.
\]
Since $b\mapsto b'$ is a permutation of $(\Z/p\Z)^\times$, we obtain
\[
T_p(f|W_3)
=
p^{k/2-1}\left(
f|_k\beta w_3
+\chi_3(p)\sum_{u=1}^{p-1}f|_k\alpha_u w_3
+\chi_3(p)f|_k\alpha_0 w_3
\right),
\]
hence
\[
T_p(f|W_3)
=
p^{k/2-1}\left(
f|_k\beta w_3
+\chi_3(p)\sum_{u=0}^{p-1}f|_k\alpha_u w_3
\right).
\]
Factoring out $\chi_3(p)$ and using $\chi_3(p)^2=1$ gives
\[
T_p(f|W_3)
=
\chi_3(p)\,
p^{k/2-1}\left(
\sum_{u=0}^{p-1}f|_k\alpha_u w_3
+\chi_3(p)\,f|_k\beta w_3
\right)
=
\chi_3(p)\,(T_pf)|W_3.
\]
This proves the lemma.
\end{proof}

\begin{lemma}\label{lem:ordinf-Tp}
Let $k\ge0$, let $p\ge5$ be prime, and let
\[
g(q)=\sum_{n\ge N}a_n q^n\in M_k^!(\Gamma_0(3),\chi_3),
\qquad N\ge 0.
\]
Then
\[
\ord_\infty(T_pg)\ge \left\lceil\frac{N}{p}\right\rceil.
\]
\end{lemma}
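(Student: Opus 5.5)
We use the explicit $q$-expansion formula of Lemma~\ref{lem:hecke-decomp}:
\[
T_pg=\Lambda_p(g)+\chi_3(p)p^{k-1}V_p(g).
\]
Since $T_pg$ is the sum of these two Laurent series, it suffices to bound the order at $\infty$ of each one separately.

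\emph{The $V_p$ term.} By definition $V_p(g)=\sum_{n\ge N}a_nq^{pn}$, so every exponent occurring is at least $pN$. Thus $\ord_\infty(V_p g)\ge pN$. Because $N\ge0$, we have $pN\ge N\ge\lceil N/p\rceil$.

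\emph{The $\Lambda_p$ term.} We have $\Lambda_p(g)=\sum_m a_{pm}q^m$, and $a_{pm}$ can be nonzero only when $pm\ge N$. For an integer $m$ this is equivalent to $m\ge\lceil N/p\rceil$. Hence $\ord_\infty(\Lambda_p g)\ge\lceil N/p\rceil$.

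Since the order at $\infty$ of a sum is at least the minimum of the orders of the summands, we obtain
\[
\ord_\infty(T_pg)\ge\min\bigl(\lceil N/p\rceil,\,pN\bigr)=\lceil N/p\rceil .
\]

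The argument is purely coefficientwise, so it is essentially a bookkeeping check. The only place needing care is the use of the hypothesis $N\ge0$, which guarantees $pN\ge\lceil N/p\rceil$. For negative $N$ the $V_p$ term would instead produce a pole of order $p|N|$, and the inequality would fail. In that case one would have to fall back on the principal-part analysis of Proposition~\ref{prop:defect-pole-order}, which is not needed here.
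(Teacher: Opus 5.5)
Your proof is correct and matches the paper's argument: you split $T_pg=\Lambda_p(g)+\chi_3(p)p^{k-1}V_p(g)$ via Lemma~\ref{lem:hecke-decomp}, then bound $\ord_\infty(\Lambda_p g)\ge\lceil N/p\rceil$ and $\ord_\infty(V_pg)\ge pN\ge\lceil N/p\rceil$ using $N\ge0$. Your inequality $\ord_\infty(V_pg)\ge pN$ is slightly more careful than the paper's stated equality $\ord_\infty(V_pg)=pN$, which tacitly assumes $a_N\neq0$.
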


\begin{proof}
By Lemma~\ref{lem:hecke-decomp},
\[
T_pg=\Lambda_p(g)+\chi_3(p)p^{k-1}V_p(g).
\]
Since $N\ge 0$,
\[
\ord_\infty\bigl(\Lambda_p(g)\bigr)\ge \left\lceil\frac{N}{p}\right\rceil,
\qquad
\ord_\infty\bigl(V_p(g)\bigr)=pN\ge \left\lceil\frac{N}{p}\right\rceil,
\]
so the stated bound follows.
\end{proof}

\begin{remark}
The hypothesis $N\ge0$ is essential. For $N<0$ the bound $\ord_\infty(T_pg)\ge \lceil N/p\rceil$
is false in general, because the $V_p$-term contributes a pole of order $p|N|$, which is worse
than $\lceil N/p\rceil$. For a genuine weakly holomorphic example within $M_5^!(\Gamma_0(3),\chi_3)$,
take $g=C/t$, which has $\ord_\infty(g)=-1$. Then
\[
T_pg=\Lambda_p(g)+\chi_3(p)p^4 V_p(g),
\]
and the $V_p$-term contributes a pole of order $p$ at $\infty$, so for $p=5$
\[
\ord_\infty(T_5g)=-5<0=\lceil -1/5\rceil,
\]
violating the bound. In the application below the relevant input has $\ord_\infty=rp+1>0$, so
the lemma applies.
\end{remark}

\subsection*{5.4. Vanishing of $F_r$ modulo $p^4$ for all $r\ge1$}

The proof in this subsection is self-contained. The approach via applying $W_3$ to
$C/t^{rp}$ and combining with the Fricke--Hecke intertwining, which yields vanishing
for all $r\ge1$ uniformly and eliminates the need for a layer-by-layer reduction, was
suggested to the author by F.~Beukers.

\begin{theorem}[Vanishing of $F_r$ modulo $p^4$ for all $r\ge1$]\label{thm:fricke-vanishing}
For every prime $p\ge5$ and every integer $r\ge1$ one has
\[
F_r(q)=\Lambda_p\!\left(\frac{C(q)}{t(q)^{rp}}\right)-\frac{C(q)}{t(q)^r}\equiv0\pmod{p^4}.
\]
Equivalently,
\[
\widetilde G_r(q)\equiv0\pmod{p^4}.
\]
\end{theorem}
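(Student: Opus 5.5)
The plan is to work with the exact defect $\widetilde G_r$ rather than $F_r$, since by Proposition~\ref{prop:defect-congruence} the two agree modulo $p^4$. By Proposition~\ref{prop:rr-decomposition} we may write
\[
\widetilde G_r=\beta_{-1}tC+\sum_{j=0}^{N}\alpha_j\frac{C}{t^j}
\]
for some $N$. The argument has two halves. The Fricke involution kills the \emph{low} coefficients $\beta_{-1},\alpha_0,\dots,\alpha_{r-1}$ exactly. The principal part at $\infty$ makes the \emph{high} coefficients $\alpha_j$, $j\ge r$, divisible by $p^4$.

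\emph{High coefficients.} Write
\[
\widetilde G_r=F_r+\chi_3(p)p^4V_p\!\left(\frac{C}{t^{rp}}\right).
\]
This is a Laurent series with integer coefficients, since $C/t^m\in q^{-m}\Z[[q]]$. By Proposition~\ref{prop:defect-pole-order}, $F_r=O(q^{-r+1})$. Hence every coefficient of $q^{-j}$ in $\widetilde G_r$ with $j\ge r$ comes from the $V_p$-term and is divisible by $p^4$. The basis $C/t^j=q^{-j}(1+O(q))$ has integer $q$-expansions and is unitriangular. So I would determine $\alpha_N,\alpha_{N-1},\dots,\alpha_r$ by downward induction. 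At each step, $\alpha_j$ equals the coefficient of $q^{-j}$ in $\widetilde G_r-\sum_{i>j}\alpha_i C/t^i$, which is $\equiv0\pmod{p^4}$ by induction. Thus $\alpha_j\in p^4\Z$ for all $j\ge r$.

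\emph{Low coefficients.} I would apply $W_3$ and use Lemma~\ref{lem:fricke-hecke}:
\[
\widetilde G_r|W_3=\chi_3(p)\,T_p\!\left(\frac{C}{t^{rp}}\Big|W_3\right)-\frac{C}{t^r}\Big|W_3 .
\]
By Proposition~\ref{prop:fricke-C}, for $m\ge0$,
\[
\frac{C}{t^m}\Big|W_3=(C|W_3)\,(t|_0W_3)^{-m}=-27i\cdot3^{6m}\,t^{m+1}C,
\]
which is holomorphic at $\infty$ of exact order $m+1$. For $m=rp$ the input has $\ord_\infty=rp+1\ge0$, so Lemma~\ref{lem:ordinf-Tp} gives
\[
\ord_\infty T_p\bigl((C/t^{rp})|W_3\bigr)\ge\lceil (rp+1)/p\rceil=r+1 .
\]
The second term has order $r+1$ as well. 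Hence $\widetilde G_r|W_3=O(q^{r+1})$.

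On the other hand, applying $W_3$ termwise to the expansion gives
\[
\widetilde G_r|W_3=-\tfrac{i}{27}\beta_{-1}C-27i\sum_{j}3^{6j}\alpha_j\,t^{j+1}C .
\]
The forms $C,tC,t^2C,\dots,t^rC$ have orders $0,1,\dots,r$ at $\infty$ with nonzero leading coefficients. The terms with $j\ge r$ all have order $\ge r+1$. Comparing coefficients of $q^0,\dots,q^r$ successively therefore forces $\beta_{-1}=0$ and $\alpha_0=\dots=\alpha_{r-1}=0$ exactly.

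\emph{Conclusion.} We obtain $\widetilde G_r=\sum_{j\ge r}\alpha_jC/t^j$ with each $\alpha_j\in p^4\Z$ and each $C/t^j\in q^{-j}\Z[[q]]$. Hence $\widetilde G_r\equiv0\pmod{p^4}$, and therefore $F_r\equiv0\pmod{p^4}$.

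I expect the main obstacle to be the Fricke step. One must make sure that the slash normalisations are consistent: the product rule for $(C/t^m)|_5W_3$ with weight-$0$ factors, and $T_p$ acting in the same normalisation as in Lemma~\ref{lem:fricke-hecke}. A mismatch of constant factors would be harmless, but a mismatch in orders would not. I would also check that Lemma~\ref{lem:ordinf-Tp} genuinely applies, i.e.\ that $(C/t^{rp})|W_3$ lies in $M_5^!(\Gamma_0(3),\chi_3)$ with nonnegative order at $\infty$; Remark~\ref{rem:quadratic-character} and Proposition~\ref{prop:fricke-C} supply exactly this.
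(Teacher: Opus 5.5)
Your proposal is correct and is essentially the paper's own proof. It uses the same cusp-adapted expansion of $\widetilde G_r$ and the same descending triangular solve, via $F_r=O(q^{-r+1})$ and $\widetilde G_r-F_r=\chi_3(p)p^4V_p(C/t^{rp})$, to get $\alpha_j\equiv0\pmod{p^4}$ for $j\ge r$; it then combines Lemma~\ref{lem:fricke-hecke}, Lemma~\ref{lem:ordinf-Tp} and $(C/t^j)|W_3=-27i\,3^{6j}Ct^{j+1}$ exactly as the paper does to force $\beta_{-1}=\alpha_0=\cdots=\alpha_{r-1}=0$ exactly, the only cosmetic difference being that you solve over $\Z$ rather than $\Z_{(p)}$, which is equally valid since $\widetilde G_r$ and every $C/t^j$ have integer $q$-expansions.
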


\begin{proof}
By Proposition~\ref{prop:rr-decomposition} there is a finite expansion
\[
\widetilde G_r=\beta_{-1}tC+\sum_{j=0}^{J}\alpha_j\frac{C}{t^j}
\]
for some $J\ge0$.

\emph{Step 1.} This is the required cusp-adapted decomposition.

\emph{Step 2.} By Proposition~\ref{prop:defect-congruence},
\[
F_r\equiv \widetilde G_r \pmod{p^4},
\]
and in fact
\[
\widetilde G_r-F_r=\chi_3(p)p^4V_p\!\left(\frac{C}{t^{rp}}\right),
\]
whose coefficients lie in $p^4\Z_{(p)}$. Thus the $q$-expansion of $\widetilde G_r$ is $p$-integral.

We record the integrality of the basis elements explicitly. Since $C(q)=3E_{5,\chi_0,\chi_3}(q)\in\Z[[q]]$
and
\[
\frac{1}{t(q)}\in q^{-1}\Z[[q]],
\qquad
H(q)=\frac{q}{t(q)}\in 1+q\Z[[q]],
\]
the basis elements
\[
tC=q+O(q^2),\qquad C=1+O(q),\qquad \frac{C}{t^j}=q^{-j}CH^j\in q^{-j}\Z[[q]]
\qquad (j\ge1)
\]
all have $\Z$-coefficients in their $q$-expansions; in particular they lie in $\Z_{(p)}$.
Combined with the $p$-integrality of $\widetilde G_r$, this shows that in the expansion
\[
\widetilde G_r=\beta_{-1}tC+\sum_{j=0}^{J}\alpha_j\frac{C}{t^j}
\]
one has $\beta_{-1},\alpha_j\in\Z_{(p)}$: indeed, the basis is lower-triangular with respect to the
principal part at $\infty$ (with diagonal $1$), so the coefficients are uniquely recovered from the
$q$-expansion of $\widetilde G_r$ by descending triangular solve over $\Z_{(p)}$.

By Proposition~\ref{prop:defect-pole-order},
\[
F_r=O(q^{-r+1}),
\]
so the coefficients of $q^{-J},q^{-J+1},\ldots,q^{-r}$ in $F_r$ all vanish. Combined with
\[
\widetilde G_r-F_r=\chi_3(p)p^4V_p\!\left(\frac{C}{t^{rp}}\right)\in p^4\Z_{(p)}[[q,q^{-1}]],
\]
this gives
\[
[q^{-n}]\widetilde G_r\equiv 0\pmod{p^4}
\qquad (r\le n\le J).
\]

We now prove $\alpha_j\equiv0\pmod{p^4}$ for all $j\ge r$ by descending induction on $j$.

If $J<r$, the claim is vacuous; in that case every $\alpha_j$ with $j\ge r$ is zero by the expansion
itself and there is nothing to prove. We assume henceforth $J\ge r$.

\emph{Base case $j=J$.} The only basis element contributing to $[q^{-J}]\widetilde G_r$ is $C/t^J$
(other elements $C/t^{j'}$ with $j'<J$ start at $q^{-j'}$ with $j'<J$, and $tC$, $C$ start at
$q^0$ or higher). Hence
\[
[q^{-J}]\widetilde G_r=\alpha_J\cdot[q^{-J}](C/t^J)=\alpha_J\cdot 1=\alpha_J.
\]
Since $J\ge r$, the left side is $\equiv0\pmod{p^4}$, so $\alpha_J\equiv0\pmod{p^4}$.

\emph{Inductive step.} Suppose $\alpha_J,\alpha_{J-1},\ldots,\alpha_{j+1}\equiv0\pmod{p^4}$ for some
$j$ with $r\le j<J$. The coefficient of $q^{-j}$ in $\widetilde G_r$ is
\[
[q^{-j}]\widetilde G_r
=
\alpha_j+\sum_{j'=j+1}^{J}\alpha_{j'}\cdot[q^{-j}](C/t^{j'}),
\]
where each $[q^{-j}](C/t^{j'})\in\Z$. By inductive hypothesis, every $\alpha_{j'}$ with $j'>j$ is
$\equiv0\pmod{p^4}$, so the sum is $\equiv0\pmod{p^4}$. Since $[q^{-j}]\widetilde G_r\equiv0\pmod{p^4}$
(because $j\ge r$), we conclude $\alpha_j\equiv0\pmod{p^4}$.

This proves $\alpha_j\equiv0\pmod{p^4}$ for all $j\ge r$.

\emph{Step 3.} Apply $W_3$ to $C/t^{rp}$. By Proposition~\ref{prop:fricke-C},
\[
\left(\frac{C}{t^{rp}}\right)\Big|W_3
=
-27i\,3^{6rp}Ct^{rp+1},
\qquad
\left(\frac{C}{t^r}\right)\Big|W_3
=
-27i\,3^{6r}Ct^{r+1}.
\]
The first form has order $rp+1$ at $\infty$, so Lemma~\ref{lem:ordinf-Tp} gives
\[
\ord_\infty\!\left(
T_p\!\left(\left(\frac{C}{t^{rp}}\right)\Big|W_3\right)
\right)
\ge
\left\lceil\frac{rp+1}{p}\right\rceil
=
r+1.
\]
By Lemma~\ref{lem:fricke-hecke},
\[
\left(T_p\!\left(\frac{C}{t^{rp}}\right)\right)\Big|W_3
=
\chi_3(p)\,
T_p\!\left(\left(\frac{C}{t^{rp}}\right)\Big|W_3\right),
\]
hence
\[
\ord_\infty\!\left(
\left(T_p\!\left(\frac{C}{t^{rp}}\right)\right)\Big|W_3
\right)\ge r+1.
\]
The second transformed term also has order $r+1$ at $\infty$, so
\[
\ord_\infty(\widetilde G_r|W_3)\ge r+1.
\]

Now
\begin{align*}
(Ct^{-j})|W_3
&=
(C|W_3)\,(t^{-j}|_0W_3) \\
&=
(-27itC)\,(3^{6j}t^j) \\
&=
-27i\,3^{6j}Ct^{j+1}
\qquad (j\ge0).
\end{align*}
and the same formula also covers $j=-1$ because
\[
(tC)|W_3=-\frac{i}{27}C=-27i\,3^{-6}C.
\]
Hence
\[
\widetilde G_r|W_3
=
-27i\left(
\beta_{-1}3^{-6}C+\sum_{j=0}^{J}\alpha_j3^{6j}Ct^{j+1}
\right).
\]
The terms
\[
C,\ Ct,\ Ct^2,\ \dots
\]
have distinct orders $0,1,2,\dots$ at $\infty$, so they are linearly independent.
Since $\ord_\infty(\widetilde G_r|W_3)\ge r+1$ is an exact bound (not a congruence),
and since the scalars $3^{-6}$ and $3^{6j}$ are nonzero, the coefficients of
$C, Ct, Ct^2, \ldots, Ct^r$ in the expansion of $\widetilde G_r|W_3$ vanish exactly.
Hence
\[
\beta_{-1}=0,
\qquad
\alpha_j=0
\qquad (0\le j\le r-1).
\]
These are \emph{exact} equalities, not merely congruences modulo $p^4$.

Combining Steps~2 and~3 gives
\[
\beta_{-1}=0,
\qquad
\alpha_j=0
\quad (0\le j\le r-1),
\qquad
\alpha_j\equiv0\pmod{p^4}
\quad (j\ge r).
\]
Therefore
\[
\widetilde G_r\equiv0\pmod{p^4}.
\]
Finally, Proposition~\ref{prop:defect-congruence} gives
\[
F_r\equiv \widetilde G_r\equiv0\pmod{p^4},
\]
as claimed.
\end{proof}

\begin{theorem}[Theorem A]\label{thm:super}
For every prime $p\ge5$ and every integer $m\ge1$,
\[
A(pm)\equiv A(m)\pmod{p^4}.
\]
\end{theorem}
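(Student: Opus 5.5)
The plan is to combine Theorem~\ref{thm:LB} with Theorem~\ref{thm:fricke-vanishing} and take constant terms. Fix $p\ge5$ and $m\ge1$. Theorem~\ref{thm:LB}, applied with index $mp$ and with index $m$, gives
\[
B_{mp}=\cst_q\!\left(\frac{C(q)}{t(q)^{mp}}\right),
\qquad
B_m=\cst_q\!\left(\frac{C(q)}{t(q)^{m}}\right).
\]
By definition of $\Lambda_p$, the constant term of $\Lambda_p(f)$ is the coefficient $a_{0\cdot p}=a_0$ of $f$. Hence
\[
\cst_q\bigl(\Lambda_p(f)\bigr)=\cst_q(f)
\]
for every Laurent series $f$, and so $B_{mp}=\cst_q\bigl(\Lambda_p(C/t^{mp})\bigr)$. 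Subtracting the two formulas gives $B_{mp}-B_m=\cst_q(F_m)$, where $F_m$ is the defect of \S5 with $r=m$.

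Next I invoke Theorem~\ref{thm:fricke-vanishing} with $r=m\ge1$, which gives $F_m\equiv0\pmod{p^4}$ coefficientwise. For this to be meaningful, the coefficients of $F_m$ must be integers. This holds because $C/t^{n}=q^{-n}CH^{n}\in q^{-n}\Z[[q]]$ for every $n\ge0$, and $\Lambda_p$ preserves $\Z$-coefficients. In particular the constant coefficient satisfies $\cst_q(F_m)\equiv0\pmod{p^4}$, so $B_{mp}\equiv B_m\pmod{p^4}$.

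Finally, I pass from $B$ to $A$. Since $B_n=(-1)^nA_n$, we have $A(mp)=(-1)^{mp}B_{mp}$ and $A(m)=(-1)^mB_m$. Because $p$ is odd, $(-1)^{mp}=(-1)^m$, so
\[
A(mp)-A(m)=(-1)^m\,(B_{mp}-B_m)\equiv0\pmod{p^4}.
\]

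None of these steps is hard once Theorem~\ref{thm:fricke-vanishing} is available. The only points needing care are that $\Lambda_p$ preserves constant terms, that $F_m$ has integral coefficients, and that the sign is harmless because $p$ is odd.
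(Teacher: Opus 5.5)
Your proof is correct and matches the paper's argument step for step. Both apply Lagrange--B\"urmann to $B_{mp}$ and $B_m$, use $\cst_q(\Lambda_p f)=\cst_q(f)$ to get $B_{mp}-B_m=\cst_q(F_m)$, invoke Theorem~\ref{thm:fricke-vanishing} with $r=m$, and use that $p$ is odd to pass from $B$ to $A$; your remark that $F_m$ has integer coefficients is stated in the paper just before Proposition~\ref{prop:defect-congruence} rather than in the proof itself.
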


\begin{proof}
By Theorem~\ref{thm:LB},
\[
B_m=\cst_q\!\left(\frac{C(q)}{t(q)^m}\right).
\]
Since $\cst_q(\Lambda_p(f))=\cst_q(f)$ for every Laurent series $f$,
\[
B_{mp}
=
\cst_q\!\left(\frac{C(q)}{t(q)^{mp}}\right)
=
\cst_q\!\left(\Lambda_p\!\left(\frac{C(q)}{t(q)^{mp}}\right)\right).
\]
Therefore
\[
B_{mp}-B_m
=
\cst_q\!\left(
\Lambda_p\!\left(\frac{C(q)}{t(q)^{mp}}\right)-\frac{C(q)}{t(q)^m}
\right)
=
\cst_q(F_m).
\]
By Theorem~\ref{thm:fricke-vanishing}, $F_m\equiv0\pmod{p^4}$, hence
\[
B_{mp}\equiv B_m\pmod{p^4}.
\]
Since $B_n=(-1)^nA_n$ and $p$ is odd, this is equivalent to
\[
A(pm)\equiv A(m)\pmod{p^4}.
\]
\end{proof}

\end{document}